\newtheorem{theorem}{Theorem}[section]
\newtheorem{proposition}[theorem]{Proposition}
\newtheorem{lemma}[theorem]{Lemma}
\newtheorem{corollary}[theorem]{Corollary}
\newcommand\Prefix[3]{\vphantom{#3}#1#2#3}
\theoremstyle{definition}
\theoremstyle{remark}
\numberwithin{equation}{section}
\begin{document}

\title[$2$-Hessian Equations in $\mathbb{R}^3$]
{$C^\infty$ local solutions\\
of elliptical $2-$Hessian equation in $\mathbb{R}^3$}

\author{G. Tian, Q. Wang and C.-J. Xu}
\address{Guji Tian,
Wuhan Institute of Physics and
Mathematics,Chinese Academy of
Sciences,Wuhan, P.R. China}
\email{tianguji@wipm.ac.cn}
\address{Qi Wang,
School of Mathematics and Statistics, Wuhan University 430072,
Wuhan, P. R. China}
\email{qiwang88@whu.edu.cn}
\address{Chao-Jiang Xu,
School of Mathematics and Statistics, Wuhan University 430072,
Wuhan, P. R. China\\
and
Universit\'e de Rouen, CNRS UMR 6085-Laboratoire de Math\'ematiques, 76801 Saint-Etienne du Rouvray, France}
\email{chao-jiang.xu@univ-rouen.fr}

\subjclass[2000]{ 35J60; 35J70}

\keywords{Elliptical $k$- Hessian equation, existence of local solutions, convexities.}

\maketitle

\centerline{In remembrance of the late professor Rou-Huai Wang}
\centerline{on the occasion of his 90th Birthday}

\begin{abstract}
In this work, we study the existence of $C^{\infty}$ local solutions
 to $2$-Hessian equation in $\mathbb{R}^{3}$. We consider the case
 that the right hand side function $f$  possibly vanishes, changes
 the sign, is positively or negatively defined. We also give the convexities
 of solutions which are related with the annulation or the sign of right-hand
 side function $f$. The associated linearized operator are uniformly elliptic.
\end{abstract}

\section{Introduction}\label{S1}
We are interested by the following $k$-Hessian equation
\begin{equation}\label{eq:1}
S_{k}[u]=f(y,u,Du)
\end{equation}
on an open domain $\Omega\subset\mathbb{R}^{n}, 1\le k\le n, f\in
C^\infty(\Omega\times \mathbb{R}\times \mathbb{R}^n)$. Denote $ Du=(\partial_1u,\ldots,\partial_nu)$ and $D^2u$ is the Hessian matrix $(\partial_i\partial_ju)_{1\leq i,j\leq n}$. the Hessian operators
$S_{k}[u]$ is defined as follows:
\begin{equation}\label{eq:1b}
S_{k}[u]=\sigma_{k}(\lambda(D^2u)),\indent k=1,\ldots,n,
\end{equation}
where $\lambda(D^2u)=(\lambda_{1},\lambda_{2},\ldots,\lambda_{n})$, $\lambda_{j}$ is the
eigenvalue of the Hessian matrix $(D^{2}u)$, and
$$%\begin{equation}\label{eq:ksigma}
\sigma_{k}(\lambda)=\sum_{1\leq i_{1}<\cdots<i_{k}\leq
n}\lambda_{i_{1}}\cdots\lambda_{i_{k}}
$$%\end{equation}
is the $k$-th elementary symmetric polynomial. Denoting, for $k, j\in\{1, \cdots, n\}$,
$$
\sigma_{k,j}=\frac{\partial \sigma_{k+1}(\lambda)}{\partial \lambda_{j}}=\sigma_{k}|_{\lambda_j=0}.
$$
We also introduce the G{\aa}rding cone $\Gamma_{k}$ which is the open symmetric convex cone in $\mathbb{R}^{n}$, with vertex at the origin, given by
$$
\Gamma_{k}=\{(\lambda_{1},\ldots,\lambda_{n})\in
\mathbb{R}^{n}:\,\,\sigma_{j}(\lambda)>0,\forall
j=1,\ldots,k\}.
$$
When $k=1$, \eqref{eq:1} is a semi-linear Poisson equation, and it is Monge-Amp\`ere equation for $k=n$.

We say that a function $u\in C^2$ is $k$-convex, if
$$
\lambda(D^{2}u)\in \bar{\Gamma}_{k},
$$
the $n$-convex function is simply called convex function.

We say  that a function $u$ is a local solution of \eqref{eq:1} near
$y_{0}\in \Omega$, if there exists a neighborhood of $y_0$,
$V_{y_0}\subset\Omega$ such that $u\in C^2(V_{y_0})$ satisfies the
equation \eqref{eq:1} on $V_{y_0}$.

In this work, we study the existence of $C^\infty$-local solution of the following $2$-Hessian equation in $\mathbb{R}^3$,
\begin{equation}\label{eq:1.2}
S_{2}[u]=f(y,u,Du),\indent \mbox{on}\,\,\, \Omega\subset\mathbb{R}^{3},
\end{equation}
where we also have
$$
S_{2}[u]=u_{11}u_{22}-u^2_{12}+u_{22}u_{33}-u^2_{23}+
u_{11}u_{33}-u^2_{13}.
$$
We have proved the following results.

\begin{theorem}\label{thm1}
Assume that $f\in C^{\infty}(\Omega\times\mathbb{R}\times\mathbb{R}^3)$,
then for any  $Z_{0}=(y_{0}, z_0, p_0)\in \Omega\times\mathbb{R}\times\mathbb{R}^3$,
we have that
\begin{itemize}
\item[(1)] if $f(Z_0)= 0$, then \eqref{eq:1.2} admits a
$1$-convex $C^\infty$ local solution which is not convex;
\item[(2)] if $f\geq 0$ near $Z_0$, then \eqref{eq:1.2} admits
 a $2$-convex $C^{\infty}$ local solution which is not convex.
 If $f(Z_0)>0$, \eqref{eq:1.2} admits a convex $C^{\infty}$ local
 solution.
\item[(3)] if $f(Z_0)<0$, \eqref{eq:1.2} admits a $1$-convex
$C^{\infty}$ local solution  which is not $2$-convex.
\end{itemize}
Moreover, the equation \eqref{eq:1.2} is uniformly elliptic with
 respect to the above local solutions.
\end{theorem}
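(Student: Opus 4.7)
The plan is to realize each local solution as a smooth perturbation of an explicit quadratic polynomial $Q$, chosen so that the linearized operator at $Q$ is uniformly elliptic. After translating $y_0$ to the origin and subtracting the affine function $z_0 + p_0\cdot(y-y_0)$ (which reduces to $Z_0 = (0,0,0)$ and transforms $f$ accordingly), we seek $u = Q + v$ with $Q(y) = \tfrac12\langle Ay,y\rangle$ for a symmetric $3\times 3$ matrix $A$ to be specified and $v$ small on a small ball $B_r$. The equation becomes
\[
\mathcal{F}(v) := S_2[Q + v] - f\bigl(\,\cdot\,, Q + v, DQ + Dv\bigr) = 0,
\]
and its linearization at $v = 0$ is
\[
L[w] = \sigma_2^{ij}(A)\,\partial_i\partial_j w - f_{p_i}(0,0,0)\,\partial_i w - f_z(0,0,0)\,w,
\]
where $(\sigma_2^{ij}(A)) = \operatorname{tr}(A)\,I - A$ has eigenvalues $\lambda_i(A) + \lambda_j(A)$ for $i<j$. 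Thus $L$ is uniformly elliptic iff all three pairwise sums of eigenvalues of $A$ are strictly positive---a condition \emph{strictly weaker} than $\lambda(A)\in\Gamma_2$ in $\mathbb{R}^3$.

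The matrix $A$ is chosen explicitly in each case so that $\sigma_2(\lambda(A)) = f(0,0,0)$, the pairwise-sum condition holds, and $\lambda(A)$ lies in the prescribed convexity regime. In case (1) take $\lambda(A) = (1, 1, -\tfrac12)$, giving $\sigma_2 = 0 = f(Z_0)$, pairwise sums $(2, \tfrac12, \tfrac12)$, and $\lambda_3 < 0$. In case (2), for $f \ge 0$ near $Z_0$, take $\lambda(A) = (c, c, -1)$ with $c = 1 + \sqrt{1 + f(Z_0)} \ge 2$, so that $\sigma_2 = c^2 - 2c = f(Z_0) \ge 0$, the pairwise sums are at least $(4, 1, 1) > 0$, and $\lambda_3 = -1 < 0$ (not convex); when $f(Z_0) > 0$ one instead takes $A = \sqrt{f(Z_0)/3}\,I$, producing a strictly convex solution. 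In case (3) take $\lambda(A) = (c, c, -\tfrac{3c}{4})$ with $c = \sqrt{-2 f(Z_0)} > 0$, yielding $\sigma_2 = -c^2/2 = f(Z_0) < 0$, pairwise sums $(2c, c/4, c/4) > 0$, and $\lambda(A) \in \Gamma_1 \setminus \bar{\Gamma}_2$. With these choices, $\mathcal{F}(0)(y) = \sigma_2(A) - f(y, Q, DQ) = O(|y|)$, so $\|\mathcal{F}(0)\|_{C^{0,\alpha}(\bar B_r)} \to 0$ as $r \to 0$.

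With $A$ fixed, one solves $\mathcal{F}(v) = 0$ on $B_r$ by a contraction mapping argument in $C^{2,\alpha}_0(\bar B_r) := \{v \in C^{2,\alpha}(\bar B_r) : v|_{\partial B_r} = 0\}$. Schauder theory for the constant-coefficient operator $L$ yields a right inverse $L^{-1}: C^{0,\alpha} \to C^{2,\alpha}_0$ whose norm is controlled uniformly in $r$ after the rescaling $y \mapsto y/r$ (which preserves the principal symbol of $L$). The map $v \mapsto v - L^{-1}\mathcal{F}(v)$ then contracts a small ball of $C^{2,\alpha}_0$, producing a fixed point $v_*$ of arbitrarily small $C^{2,\alpha}$ norm. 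Schauder bootstrap promotes $v_*$ to $C^\infty(B_r)$; since $\|D^2 v_*\|_{C^0}$ is small, the eigenvalues of $D^2 u = A + D^2 v_*$ remain in a neighborhood of $\lambda(A)$, so $u = Q + v_*$ inherits the prescribed convexity property and $(\sigma_2^{ij}(D^2 u))$ remains positive definite on $B_r$, giving the uniform ellipticity claim.

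The principal obstacle is case (3): the Hessian $A$ must sit outside the natural ellipticity cone $\Gamma_2$ of $S_2$, and yet the linearized operator is required to be uniformly elliptic. The reconciling structural fact, specific to dimension three, is that the ``linearized ellipticity cone'' $\{\lambda \in \mathbb{R}^3 : \lambda_i + \lambda_j > 0 \ \forall\, i \neq j\}$ strictly contains $\Gamma_2$---this is exactly what permits a $1$-convex-non-$2$-convex background $Q$ together with uniformly elliptic $L$. A secondary delicacy, the uniformity of the Schauder inverse as $r \to 0$, is handled by the rescaling above that preserves the constant principal part of $L$; the remaining steps (perturbation, smoothness, persistence of convexity) are routine consequences.
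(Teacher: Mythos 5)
Your proposal is correct and follows the paper's strategy in all essentials: reduce to $Z_0=(0,0,0)$, perturb an explicit quadratic $Q=\frac12\langle Ay,y\rangle$ with $\sigma_2(\lambda(A))=f(Z_0)$ and all pairwise sums $\lambda_i(A)+\lambda_j(A)>0$ (your explicit eigenvalue choices play the role of Lemmas~\ref{lm:tau1} and~\ref{lemma:negative}, and your remark that the ``pairwise-sum cone'' strictly contains $\Gamma_2$ is exactly the paper's point that $E_2\supsetneq\Gamma_2$), rescale to a unit ball so the first- and zeroth-order coefficients of the linearization acquire factors $r$ and $r^2$ (the paper's $\varepsilon^2$ and $\varepsilon^4$, with $r=\varepsilon^2$, built into the ansatz $u=\psi+\varepsilon^5w(\varepsilon^{-2}y)$), then read off convexity and uniform ellipticity from the closeness of $\lambda(D^2u)$ to $\lambda(A)$. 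The one genuine methodological difference is the solution scheme: the paper runs a Newton iteration, re-linearizing at each iterate and proving uniform solvability and Schauder bounds for $L_G(w_m)$ (Lemma~\ref{lemma:schauder}) plus a quadratic error estimate (Proposition~\ref{nms}), whereas you invert only the linearization at the background and close with a contraction mapping; since the map $v\mapsto S_2[Q+v]-f(\cdot,Q+v,DQ+Dv)$ is smooth from $C^{2,\alpha}$ to $C^{0,\alpha}$ with no loss of derivatives, this is legitimate and somewhat more economical. Two points that you pass over quickly deserve explicit justification. First, solvability of the Dirichlet problem for $L$ is not automatic, because the zeroth-order coefficient $-f_z$ need not be nonpositive; it is precisely the factor $r^2$ produced by your rescaling (the paper's $\varepsilon^4$) that allows one to combine the maximum-principle estimates (Theorem 3.7 and Corollary 3.8 of \cite{GT}) with the Fredholm alternative to obtain a right inverse whose norm is uniform in $r$ --- your phrase ``controlled uniformly in $r$ after the rescaling'' should be argued this way, as the paper does. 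Second, in the degenerate part of case (2) ($f\ge0$, $f(Z_0)=0$), $2$-convexity does not follow from eigenvalue continuity alone, since $\sigma_2(\lambda(A))=0$ there; one must use the equation itself, namely $\sigma_2(\lambda(D^2u))=S_2[u]=f\ge0$ together with $\sigma_1(\lambda(D^2u))>0$ by continuity, which is exactly how the paper concludes.
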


For the local solution, Hong and Zuily \cite{HZ} obtained the
 existence of $C^{\infty}$ local solutions to arbitrary dimensional
Monge-Amp\'{e}re equation, in which $f$ is not only nonnegative
but also satisfies a variant of H\"{o}rmander rank condition. Lin \cite{Lin}
proved the existence of a local $H^{s}$ solution in $\mathbb{R}^{2}$ with $f\geq 0$. We will follow the ideas of \cite{HZ} and \cite{Lin,Lin1},
the existence of the local solution can be obtained by a perturbation of polynomial-typed solution for $S_{2}[u]=a$ where $a$ is a constant, so that our solution is in the form
$$%\begin{equation}\label{eq:1.14}
u(y)=\frac 12\sum^{3}_{j=1}\tau_{j}y^{2}_{j}+\varepsilon^{5}w(\varepsilon^{-2}y),\
\ \ \tau=(\tau_1, \tau_2, \tau_3)\in \mathbb{R}^3.
$$%\end{equation}.

The significance of theorem \ref{thm1} is our results break away from
the framework of G{\aa}rding cone. The sign of $f$ is permitted to
change in case (1). For the case (2),
we say that it is a degenerate 2-Hessian equation if $f(Z_0)=0$(see \cite{WX}).
The non-convex solution in (1) and (2) never occurs for Monge-Amp\'{e}re
equation. There is also many works about the convexity of solution to Hessian equation,  see \cite{MX} and reference therein. Besides, these results seems to be strange. However, that is because the relationship between the sign of $f$ and the ellipticity of the nonlinear $k$-Hessian equation may not be close.

The rest of this paper is arranged as follow: in Section \ref{s2}, we will give
definitions and some known results. Section \ref{S3} is devoted to the proof
of Theorem \ref{thm1}.

\section{Preliminaries}\label{s2}
In this section, we collect some definitions and known results of $k$-Hessian equations. Firstly some algebraic properties of G{\aa}rding cone.
\begin{proposition}[See \cite{W}]\label{pr:1}
Using the notations introduced in Section \ref{S1},
\begin{itemize}
\item[(1)] $\sigma_k(\lambda)=0$ for $\lambda\in\partial\Gamma_k$ and
$$
\Gamma_{n}\subset\ldots\subset\Gamma_{k}\subset\ldots\subset\Gamma_{1}\, .
$$
\item[(2)] Maclaurin's inequalities, for any $\lambda\in \Gamma_{k}, 1\le l\le k$,
$$
\left[\frac{\sigma_{k}(\lambda)}{(^{n}_{k})}\right]^{1/k}\leq
\left[\frac{\sigma_{l}(\lambda)}{(^{n}_{l})}\right]^{1/l}.
$$
\item[(3)] we also have
\begin{equation*}\left\{ \label{eq:1.10}
\begin{array}{l}
\sigma_{k}(\lambda)=\lambda_{i}\sigma_{k-1;i}(\lambda)+\sigma_{k;i}(\lambda),\indent  \forall
\lambda\in \mathbb{R}^n,\\
\sum_{i=1}^{n}\sigma_{k, i}(\lambda)=(n-k)\sigma_k(\lambda),\indent  \forall
\lambda\in \mathbb{R}^n.\\
\end{array}\right.
\end{equation*}
\item[(4)] Assume that $\lambda\in \Gamma_{k}$ is in descending order,
\begin{equation*}\label{eq:1.11a}
\lambda_{1}\geq \cdots\lambda_{p-1}\geq \lambda_{p}>0\geq
\lambda_{p+1}\geq \cdots\lambda_{n},
\end{equation*}
then $p\geq k$ and
\begin{equation}\label{eq:1.11}
\sigma_{k-1;n}(\lambda)\geq
\cdots\ge\sigma_{k-1;1}(\lambda)>0.
\end{equation}
\end{itemize}
\end{proposition}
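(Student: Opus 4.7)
The plan is to treat items (1)--(3) with standard algebraic arguments and invest the main effort in item (4).

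For item (1), the nesting $\Gamma_n\subset\cdots\subset\Gamma_1$ is immediate since $\Gamma_k$ is cut out by strictly more inequalities than $\Gamma_{k-1}$. The boundary identity $\sigma_k(\lambda)=0$ on $\partial\Gamma_k$ I would deduce from G\aa rding's theory of hyperbolic polynomials: Rolle's theorem applied to $P(t)=\prod_i(t+\lambda_i)$ shows that $t\mapsto\sigma_k(\lambda+te)$ with $e=(1,\ldots,1)$ has only real roots, so $\sigma_k$ is hyperbolic in the direction $e$, the cone $\Gamma_k$ coincides with the corresponding G\aa rding component, and the boundary of such a component is by construction contained in $\{\sigma_k=0\}$.

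Item (3) is elementary combinatorics: splitting $\sigma_k(\lambda)=\sum_{|S|=k}\prod_{j\in S}\lambda_j$ by whether $i\in S$ factors the two pieces as $\lambda_i\sigma_{k-1;i}$ and $\sigma_{k;i}$, while a double count shows each $k$-subset $S$ contributes once to $\sigma_{k,i}$ for every $i\notin S$, giving the factor $n-k$. Item (2) then follows from Newton's inequalities $\sigma_j^2\ge\frac{(j+1)(n-j+1)}{j(n-j)}\sigma_{j-1}\sigma_{j+1}$ (another Rolle argument on $P$, valid for arbitrary real $\lambda$); writing $P_j=\sigma_j/\binom{n}{j}$ rearranges these to $P_j^2\ge P_{j-1}P_{j+1}$, which telescopes on $\Gamma_k$ to the monotonicity of $P_j^{1/j}$ in $j$.

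Item (4) carries the main work. For the bound $p\ge k$ I would again use Rolle and interlacing for $P$: its $(n-k)$-th derivative has $k$ real roots, which I write as $-\tilde\lambda_1,\ldots,-\tilde\lambda_k$ in descending $\tilde\lambda$-order; coefficient matching gives $\sigma_j(\tilde\lambda)=\binom{n-j}{k-j}\sigma_j(\lambda)/\binom{n}{k}$ for $j\le k$, so $\lambda\in\Gamma_k$ forces $\tilde\lambda$ into the positive orthant of $\mathbb{R}^k$, and the standard interlacing $\tilde\lambda_j\le\lambda_j$ yields $\lambda_j>0$ for $j=1,\ldots,k$. For the chain $\sigma_{k-1;n}\ge\cdots\ge\sigma_{k-1;1}>0$, strict positivity of each $\sigma_{k-1;i}$ on $\Gamma_k$ is the G\aa rding-type statement that the partial derivative $\partial\sigma_k/\partial\lambda_i$ is hyperbolic with cone containing $\Gamma_k$; the monotonicity reduces, via the regrouping identity $\sigma_{k-1;i}-\sigma_{k-1;j}=(\lambda_j-\lambda_i)\sigma_{k-2;ij}$ (matching $(k-1)$-subsets that contain exactly one of $i,j$), to the analogous positivity $\sigma_{k-2;ij}\ge 0$ on $\Gamma_k$, which follows from the same hyperbolicity principle applied to the second partial. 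The essential subtlety is the hyperbolic-derivative lemma itself, which I would invoke from \cite{W}; once it is in hand, sequencing the remaining positivity claims is routine.
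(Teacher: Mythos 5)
The paper states Proposition~\ref{pr:1} with only the citation to \cite{W} and gives no proof of its own, so there is no in-paper argument to compare against; what follows is an assessment of your proposal on its own terms. Your sketch is correct. Items~(1)--(3) are handled the standard way, and the Newton--Maclaurin chain you invoke is the usual route to~(2). For item~(4), the Rolle/coefficient-matching step indeed gives $\sigma_j(\tilde\lambda)=\binom{n-j}{k-j}\sigma_j(\lambda)/\binom{n}{k}$ for $j\le k$ (so $\tilde\lambda$ lands in the positive orthant of $\mathbb{R}^k$, since $\Gamma_k$ in ambient dimension $k$ \emph{is} the positive orthant), and the iterated interlacing inequality $-\lambda_j\le -\tilde\lambda_j$ for $j\le k$ then gives $\lambda_k\ge\tilde\lambda_k>0$, hence $p\ge k$. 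The regrouping identity $\sigma_{k-1;i}-\sigma_{k-1;j}=(\lambda_j-\lambda_i)\sigma_{k-2;ij}$ checks out, and with $\sigma_{k-2;ij}\ge 0$ on $\Gamma_k$ it yields the monotone chain in the stated order. The one place where you lean on an external input is the G\aa rding derivative lemma; note that $e_i$ lies only in $\overline{\Gamma_k}$ (not $\Gamma_k$) for $k\ge 2$, so you need the version of that lemma valid for directions in the \emph{closed} cone --- which does hold, and still gives $\Gamma(D_{e_i}\sigma_k,e)\supseteq\Gamma_k$, hence $\sigma_{k-1;i}>0$ on $\Gamma_k$. With that caveat made explicit, the argument is complete, and is a reasonable self-contained alternative to simply pointing the reader to \cite{W}.
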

When $n=3$, we see that $\sigma_{3}(\lambda)>0$
cannot occur for $\lambda\in\partial \Gamma_{2}(\lambda)$, therefore
we can express $\partial\Gamma_{2}$ as two parts
\begin{align*}
&\partial\Gamma_{2}(\lambda)=\mathbf P_{1}\cup \mathbf P_{2},\\
&\mathbf P_{1}=\{\lambda\in \mathbb{R}^3; \sigma_{1}(\lambda)\geq0,\,\sigma_{2}(\lambda)
=\sigma_{3}(\lambda)=0\},\\
&\mathbf P_{2}= \{\lambda\in \mathbb{R}^3; \sigma_{1}(\lambda)>0,\,\sigma_{2}(\lambda)=0,
\,\sigma_{3}(\lambda)<0\}.
\end{align*}

Next, we will recall that what condition can lead to the ellipticity.

As for the framework of ellipticity, we follow the ideas of \cite{Ivo} and \cite{IPY}. Denote $\textup{Sym}(n)$ as the
set of symmetric real $n\times n$ matrix. Through the matrix language, we recall the direct condition which leads to the
elliptic $k$-Hessian operator. The ellipticity set of the $k$-Hessian operator, $k=1,2,\ldots,n$, is
$$
E_k=\left\{S\in {\textup {Sym}}(n):S_{k}(S+t\xi\times\xi)>S_{k}(S)>0,|\xi|=1,t\in \mathbb{R}^+\right\}
$$
and the G{\aa}rding cones
$$
\Gamma_k=\left\{S\in {\textup {Sym}}(n):S_{k}(S+tId)>S_{k}(S)>0, t\in \mathbb{R}^+\right\},
$$
where the definition of $S_k(S)$ is given in \eqref{eq:1b}. It is
easy to show that $E_k=\Gamma_k$ only for $k=1,n$ and the example in
\cite{IPY} assures that $\Gamma_k\subset E_k$ and $mess(E_k\setminus
\Gamma_k)>0$ when $1<k<n.$ Ivochkina, Prokofeva and Yakunina
\cite{IPY} point out that the ellipticity of \eqref{eq:1} is
independent of the sign of $f$.

We now present an algebraic property of
$$
\frac{\partial}{\partial \tau_i}\sigma_2(\tau)=\sigma_{1,i}(\tau),\indent
i=1,2,3,
$$
for $\tau=(\tau_{1},\tau_{2},\tau_{3})\in \mathbf{P}_{2}$.

\begin{lemma}\label{lm:tau1}
Assume that $\tau\in\mathbf{P}_{2}$, $\tau_{1}\geq \tau_{2}\geq \tau_{3}$. Then we have
$$%\begin{equation}\label{eq:unif}
0<\sigma_{1,1}(\tau)\leq \sigma_{1,2}(\tau)\leq\sigma_{1,3}(\tau),
$$%\end{equation}
and
\begin{equation}\label{eq:unif2}
\tau_{3}<0<\tau_{2}\leq \tau_{1}.
\end{equation}
\end{lemma}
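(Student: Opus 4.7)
\medskip

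\noindent\textbf{Proof plan.} My plan is to turn the statement entirely into statements about the three elementary symmetric polynomials of $(\tau_1,\tau_2,\tau_3)$ by writing down $\sigma_{1,i}(\tau)$ explicitly. Since $\sigma_2(\tau)=\tau_1\tau_2+\tau_2\tau_3+\tau_1\tau_3$ in dimension three, differentiation gives $\sigma_{1,1}(\tau)=\tau_2+\tau_3$, $\sigma_{1,2}(\tau)=\tau_1+\tau_3$ and $\sigma_{1,3}(\tau)=\tau_1+\tau_2$. The two chain inequalities $\sigma_{1,1}(\tau)\le\sigma_{1,2}(\tau)\le\sigma_{1,3}(\tau)$ are then equivalent to $\tau_2\le\tau_1$ and $\tau_3\le\tau_2$, both of which are part of the ordering hypothesis, so that part is immediate.

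The real content is the sign information in \eqref{eq:unif2} and the strict positivity $\sigma_{1,1}(\tau)>0$. First I would use $\sigma_3(\tau)=\tau_1\tau_2\tau_3<0$ together with $\sigma_1(\tau)>0$ to locate the signs: the product being negative means either exactly one or all three of the $\tau_i$ are negative; but all three negative is ruled out by $\sigma_1(\tau)>0$. Hence exactly one $\tau_i$ is negative, and by the descending ordering it has to be $\tau_3$. In addition, $\sigma_3(\tau)\neq 0$ forces $\tau_2\neq 0$, so $\tau_2>0$, and then $\tau_1\geq \tau_2>0$ as well. This gives $\tau_3<0<\tau_2\le\tau_1$.

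To finish, I still need $\sigma_{1,1}(\tau)=\tau_2+\tau_3>0$. This is where the constraint $\sigma_2(\tau)=0$ enters: rewriting $\sigma_2(\tau)=0$ as
\[
\tau_1(\tau_2+\tau_3)=-\tau_2\tau_3,
\]
the right-hand side is strictly positive because $\tau_2>0$ and $\tau_3<0$, and $\tau_1>0$ from the previous step, so dividing yields $\tau_2+\tau_3>0$.

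The only mildly delicate point is the last step: the bound $\sigma_{1,1}(\tau)>0$ does not follow from $\sigma_1>0$ alone (one could imagine $\tau_2+\tau_3<0<\tau_1$), and it is exactly the hypothesis $\sigma_2=0$ on the boundary component $\mathbf{P}_2$ that forces it to be positive. Everything else is just bookkeeping of signs, so I expect the whole argument to be short.
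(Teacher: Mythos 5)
Your proof is correct, and it is genuinely simpler than the paper's. The paper establishes the chain $\sigma_{1,1}\le\sigma_{1,2}\le\sigma_{1,3}$ by a perturbation argument: it shows $\tau+\varepsilon\mathbf{1}\in\Gamma_2$ for every $\varepsilon>0$, applies the ordering statement of Proposition~\ref{pr:1}(4) inside the open cone $\Gamma_2$, and then lets $\varepsilon\to 0^+$, which only yields the nonstrict inequalities; strict positivity of $\sigma_{1,1}$ is then recovered by a separate contradiction with $\sigma_3(\tau)<0$. For \eqref{eq:unif2} the paper again invokes Proposition~\ref{pr:1}(4) together with the observations $\tau_i\ne 0$ and ``not all positive.'' You instead observe that once the $\sigma_{1,i}$ are written out explicitly, the chain $\sigma_{1,1}\le\sigma_{1,2}\le\sigma_{1,3}$ is \emph{equivalent} to the ordering hypothesis $\tau_3\le\tau_2\le\tau_1$ (the differences are $\tau_1-\tau_2$ and $\tau_2-\tau_3$), so no limiting argument is needed; you get the sign pattern $\tau_3<0<\tau_2\le\tau_1$ directly from $\sigma_3<0$ (odd number of negatives, none zero) and $\sigma_1>0$ (rules out all three negative); and you extract $\tau_2+\tau_3>0$ cleanly from the identity $\tau_1(\tau_2+\tau_3)=-\tau_2\tau_3$, which is exactly the constraint $\sigma_2(\tau)=0$. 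This avoids both the approximation-from-inside-the-cone step and the appeal to Proposition~\ref{pr:1}(4), and makes transparent that the only hypothesis doing real work in the strict positivity of $\sigma_{1,1}$ is $\sigma_2(\tau)=0$ combined with the sign pattern. One cosmetic remark: you correctly observe that $\sigma_3\ne 0$ forces $\tau_2\ne 0$; it would be worth also noting it forces $\tau_1\ne 0$, although $\tau_1\ge\tau_2>0$ already handles that.
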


The above result means that for any
$$
\psi=\frac{1}{2}\sum^{3}_{i=1}\tau_{i}y^{2}_{i},\indent \tau\in \mathbf{P}_{2}
$$
it is a solution of $2$-Hessian equation $S_2(\psi)=0$, and the
linearized operators of $S_2[u]$ at $\psi$
$$
\mathcal{L}=\sum^3_{i=1}\sigma_{1, i}(\tau)\partial^2_i
$$
is uniformly elliptic,

\begin{proof}  Recall that, for any $\tau\in \mathbb{R}^3$,
$$
\sigma_2(\tau)=\tau_1\tau_2+\tau_2\tau_3+\tau_1\tau_3,
$$
and
$$\sigma_{1;1}(\tau)=\tau_2+\tau_3,\,\,\, \sigma_{1;2}(\tau)=\tau_1+\tau_3,\,\,\,\sigma_{1;3}(\tau)=\tau_1+\tau_2.
$$
Denote
$\lambda+\mathbf{\varepsilon}=(\lambda_{1}+\varepsilon,\lambda_{2}+\varepsilon,
\lambda_{3}+\varepsilon)$ with $\lambda\in \mathbb{R}^3$ and $\varepsilon\in \mathbb{R}$, then we have the formula
$$
\sigma_{2}(\lambda+\varepsilon)=
\sum_{j=0}^{2}C(j)\varepsilon^{j}\sigma_{2-j}(\lambda),\indent \
C(j)=\frac{(^{3}_{2})(^{2}_{j})}{(^{3}_{2-j})}.
$$
For $\tau\in \mathbf{P}_{2}$, we have
$$
\sigma_{1}(\tau)>0,\indent \sigma_{2}(\tau)=0,
$$
then
$$
\tau+{\bf \varepsilon}\in \Gamma_2, \indent \forall \varepsilon>0.
$$
Applying \eqref{eq:1.11} to $\tau+\varepsilon$ and letting
$\varepsilon\rightarrow 0^+$, we get
$$
0\leq\sigma_{1,1}(\tau)\leq\sigma_{1,2}(\tau)\leq\sigma_{1,3}(\tau).
$$
Since $\tau\in \mathbf{P}_{2}$, we have
$$
\sigma_{2}(\tau)=\tau_{1}\sigma_{1,1}(\tau)+\sigma_{2,1}(\tau)=0.
$$
if $\sigma_{1,1}(\tau)=\tau_{2}+\tau_{3}=0$,
then,
$$
\sigma_{2,1}(\tau)=\tau_{2}\tau_{3}=0,
$$
thus $\sigma_{3}(\tau)=\tau_1\tau_1\tau_3=0$, which contradicts with the
assumption $\sigma_{3}(\tau)<0$. Then, We have proven that, for any $\tau\in \mathbf{P}_{2}$,
$$
0<\sigma_{1,1}(\tau)\leq\sigma_{1,2}(\tau)\leq\sigma_{1,2}(\tau).
$$

We prove now \eqref{eq:unif2}. Since $\sigma_1(\tau)>0,$ by (4) we
have $\tau_1>0$. We now claim that $\tau_{1}=\tau_{2}=\tau_{3}$ is
impossible.  Indeed, if that holds, then
$\sigma_{1}(\tau)=3\tau_{1}>0$ and
$\sigma_{2}(\tau)=3\tau_{1}^{2}>0$,  which contradicts with the
assumption $\sigma_{2}(\tau)=0$.

Besides, $\sigma_{3}(\tau)<0$ imply that $\tau_{i}\neq 0$ and $\tau_{i}$
can not be positive at the same time. Then property (4) of Proposition \ref{pr:1} implies
$$
\tau_{3}<0<\tau_{2}\leq\tau_{1}.
$$
\end{proof}

We also have the following elliptic results for $\tau\in \Gamma_1\setminus \bar{\Gamma}_2$.
\begin{lemma}\label{lemma:negative}
For the G{\aa}rding cone, we have
\begin{itemize}
  \item[(1)] For any given $a<0$, there exists $\tau\in \Gamma_1\setminus \bar{\Gamma}_2$, such that
$$
\sigma_1(\tau)>0,\qquad \sigma_2(\tau)=a.
$$
  \item[(2)] For any given $b>0$, there exists $\tau\in \Gamma_2\setminus \bar{\Gamma}_3$, such that
$$
\sigma_1(\tau)>0,\qquad \sigma_2(\tau)=b, \qquad \sigma_3(\tau)<0.
$$
\item[(3)] For any given $c>0$, there exists $\tau\in \Gamma_3$, such that
$$
\sigma_1(\tau)>0,\qquad \sigma_2(\tau)=c,\qquad \sigma_3(\tau)>0.
$$
\end{itemize}
Moreover, for all above case, we have
$$
\sigma_{1, 3}(\tau)>\sigma_{1, 2}(\tau)>\sigma_{1, 1}(\tau)>0.
$$
\end{lemma}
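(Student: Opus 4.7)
The plan is to prove each of (1), (2), (3) by exhibiting a concrete triple $\tau^{(0)}=(\tau^{(0)}_1,\tau^{(0)}_2,\tau^{(0)}_3)$ that already satisfies \emph{all} of the required strict sign conditions (including the final strict inequalities on $\sigma_{1,i}$), and then rescaling by a positive constant $s>0$. The key observation is that $\sigma_k$ is homogeneous of degree $k$ while $\sigma_{1,i}$ is homogeneous of degree $1$, so $\sigma_2(s\tau^{(0)})=s^2\sigma_2(\tau^{(0)})$ while the strict ordering $\sigma_{1,3}(s\tau^{(0)})>\sigma_{1,2}(s\tau^{(0)})>\sigma_{1,1}(s\tau^{(0)})>0$ and the signs of $\sigma_1,\sigma_3$ are preserved. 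Thus once a witness is found, the prescribed value of $\sigma_2$ is hit by a unique choice of $s$.

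For item (3) I would take $\tau^{(0)}=(3,2,1)\in\Gamma_3$: then $\sigma_1,\sigma_2,\sigma_3$ are all positive and $\sigma_{1,i}(\tau^{(0)})$ equal $3,4,5$. Choosing $s=\sqrt{c/\sigma_2(\tau^{(0)})}$ gives the desired $\tau$. For item (2) I would take $\tau^{(0)}=(3,2,-1)$, which lies in $\Gamma_2\setminus\bar\Gamma_3$ since $\sigma_1=4>0$, $\sigma_2=1>0$, $\sigma_3=-6<0$; the derivatives $\sigma_{1,i}(\tau^{(0)})=1,2,5$ are strictly positive and ordered. Rescaling by $s=\sqrt{b/\sigma_2(\tau^{(0)})}$ does the job.

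Item (1) is the only case where the witness requires a small balancing argument, and this is where I expect the main (though modest) obstacle to lie. To have $\tau\in\Gamma_1\setminus\bar\Gamma_2$ one needs $\sigma_1(\tau)>0$ and $\sigma_2(\tau)<0$, while the last claim of the lemma requires $\sigma_{1,1}(\tau)=\tau_2+\tau_3>0$. Ordering $\tau_1>\tau_2>0>\tau_3$ and writing $\tau_3=-t$ with $t>0$, the condition $\sigma_2<0$ reads $t>\tfrac{\tau_1\tau_2}{\tau_1+\tau_2}$ while $\sigma_{1,1}>0$ reads $t<\tau_2$; these are compatible because $\tfrac{\tau_1\tau_2}{\tau_1+\tau_2}<\tau_2$. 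A concrete choice is $\tau^{(0)}=\bigl(2,1,-\tfrac{4}{5}\bigr)$, which yields $\sigma_1=\tfrac{11}{5}>0$, $\sigma_2=-\tfrac{2}{5}<0$, and $\sigma_{1,i}(\tau^{(0)})=\tfrac{1}{5},\tfrac{6}{5},3$, strictly ordered and positive. Rescaling by $s=\sqrt{a/\sigma_2(\tau^{(0)})}$ produces $\tau$ with $\sigma_2(\tau)=a$.

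Finally, the uniform conclusion $\sigma_{1,3}(\tau)>\sigma_{1,2}(\tau)>\sigma_{1,1}(\tau)>0$ in all three cases is immediate from the strict ordering already verified on $\tau^{(0)}$ and the positive homogeneity $\sigma_{1,i}(s\tau^{(0)})=s\,\sigma_{1,i}(\tau^{(0)})$. One could alternatively phrase the whole argument as a continuity/intermediate-value statement in each cone, but the explicit witness-plus-scaling approach is the cleanest and makes the ellipticity conclusion transparent.
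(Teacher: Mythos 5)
Your proof is correct and follows essentially the same strategy as the paper: exhibit an explicit triple of the right ``shape'' (two positive entries, one negative for cases (1)--(2)) and use the homogeneity $\sigma_2(s\tau)=s^2\sigma_2(\tau)$ to hit the prescribed value, which is exactly what the paper does in case (1) by scaling its parametric family $\bigl((1+\alpha)(1+\beta)\Theta,(1+\alpha)\Theta,-\Theta\bigr)$ through the choice of $\Theta$. If anything, your version is slightly more complete, since you verify the witnesses for cases (2) and (3) explicitly while the paper treats only case (1) and leaves the others as immediate.
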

\begin{proof}
We only need to prove the case (1), and to find a $\tau\in\mathbb{R}^3$. We can choose $\alpha>0$ and $\beta>0$ such that
$$(1+\beta)\alpha-1<0.$$
Then take $\Theta>0$ satisfying
$$\Theta^{2}(1+\alpha)[(1+\alpha)\beta-1]=a.$$
We claim that $\tau$ can be in the following form
$$\tau=(\tau_{1},\tau_{2},\tau_{3})=((1+\alpha)(1+\beta)\Theta,(1+\alpha)\Theta,-\Theta).$$
Indeed, from $1+\beta>1$ and $(1+\alpha)\Theta>0$, we have
$$\tau_{1}>\tau_{2}>\tau_{3},$$
$\sigma_{1}(\tau)>0$ and $\sigma_{2}(\tau)=a$. Moreover,
$$\sigma_{1,3}(\tau)=(1+\alpha)(2+\beta)\Theta>\sigma_{1,2}=(\alpha\beta+\alpha+\beta)\Theta>\sigma_{1,1}(\tau)
=\alpha\Theta>0.$$
Proof is done.
\end{proof}

For the linearized operators of $k$-Hessian equation, we have the
following results,  the general version of which can be found in
section 2, \cite{GS}.

\begin{lemma}\label{lemma:linear}
The matrix  $S_{2}^{ij}(r(w))$ and $(r_{ij}(w))$ can be diagonalized
simultaneously, that is, for any smooth function $w$, we can find an
orthogonal matrix $T(x,\varepsilon)$ satisfying
$$%\begin{equation}\label{eq:2.5}
\left\{
\begin{array}{l}
T(x,\varepsilon)(S_{2}^{ij})\,\,\Prefix ^{t}{T(x,\varepsilon)}=
\textup{diag}\left[\frac{\partial \sigma_{2}(\lambda)}{\partial
\lambda_{1}}, \frac{\partial\sigma_{2}(\lambda)}{\partial
\lambda_{2}},\frac{\partial\sigma_{2}(\lambda)} {\partial
\lambda_{3}}\right]\\
T(x,\varepsilon)(r_{ij})\,\,\Prefix^{t}T(x,\varepsilon)
=\textup{diag}\left[\lambda_{1}(x,\varepsilon),
\lambda_{2}(x,\varepsilon),\lambda_{3}(x,\varepsilon)\right],
\end{array}\right.
$$%\end{equation}
where $\Prefix^{t}{T(x,\varepsilon)}$ is the transpose of
${T(x,\varepsilon)}$ and $S_{2}^{ij}(r(w))=\partial S_{2}/\partial
r_{ij}(r(w))$. Furthermore,
$$T(x,\varepsilon)\mid_{\varepsilon=0}=
Id,$$
where $Id$ is the identity matrix.
\end{lemma}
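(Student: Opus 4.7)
The plan is to reduce everything to the algebraic identity expressing $(S_2^{ij})$ as a polynomial in $(r_{ij})$. Starting from $\sigma_2(R)=\tfrac12\bigl[(\mathrm{tr}\,R)^2-\mathrm{tr}(R^2)\bigr]$, a direct differentiation gives $S_2^{ij}(R)=(\mathrm{tr}\,R)\,\delta^{ij}-r_{ij}$. In matrix language this reads $(S_2^{ij})=\sigma_1(\lambda)\,I-(r_{ij})$, which is manifestly symmetric and commutes with $(r_{ij})$. Moreover, whenever $e_i$ is an eigenvector of $(r_{ij})$ with eigenvalue $\lambda_i$, it is also an eigenvector of $(S_2^{ij})$ with eigenvalue $\sigma_1(\lambda)-\lambda_i=\sigma_{1;i}(\lambda)=\partial\sigma_2/\partial\lambda_i$, matching exactly the diagonal entries required in the statement.

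Next, since the two symmetric matrices commute, the classical linear-algebra fact that commuting symmetric operators admit a common orthonormal eigenbasis yields an orthogonal $T(x,\varepsilon)$ that simultaneously diagonalizes them, producing the two identities in the lemma. For the initial condition $T(x,\varepsilon)|_{\varepsilon=0}=Id$, I would use the ansatz in play, $u(y)=\tfrac12\sum\tau_jy_j^2+\varepsilon^5 w(\varepsilon^{-2}y)$: at $\varepsilon=0$ the Hessian collapses to $\mathrm{diag}(\tau_1,\tau_2,\tau_3)$, which is already diagonal in the standard basis, so $T=Id$ trivially diagonalizes the pair there.

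For smoothness of $T$ in $\varepsilon$ near $\varepsilon=0$, I would appeal to the standard perturbation theory of symmetric matrices: when the reference eigenvalues $\tau_i$ are pairwise distinct, the implicit function theorem applied to the characteristic polynomial yields $C^\infty$ eigenvectors, which can be orthonormalized to produce a smooth orthogonal $T$ with $T(x,0)=Id$. The main technical obstacle is possible degeneracy of the $\tau_i$ (e.g.\ $\tau_1=\tau_2$ is allowed under Lemma \ref{lm:tau1}), where individual eigenvectors can fail to be smooth; however, on each repeated-eigenvalue block $(r_{ij})|_{\varepsilon=0}$ and $(S_2^{ij})|_{\varepsilon=0}$ both act as scalars, so any rotation within the block preserves the diagonalization at $\varepsilon=0$, and a smooth extension for small $\varepsilon$ can be obtained by Rellich--Kato analytic perturbation on the invariant block, still achieving $T(x,0)=Id$.
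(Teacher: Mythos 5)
Your argument is correct and, at its core, rests on the same algebraic fact the paper uses, namely that for $n=3$, $k=2$ one has $(S_2^{ij})=(\operatorname{tr}R)\,I-R$, which is exactly the explicit $3\times3$ matrix the paper writes out. The difference is in how this is exploited: you observe that $(S_2^{ij})$ is a polynomial in $R$, hence commutes with $R$, and then invoke the classical fact that commuting symmetric matrices share an orthonormal eigenbasis, reading off the eigenvalues $\sigma_1(\lambda)-\lambda_i=\partial\sigma_2/\partial\lambda_i$ directly. The paper instead fixes an orthogonal $T$ diagonalizing $R$ and verifies by a component computation, using $\sum_i T_{si}T_{ti}=\delta_s^t$ together with the diagonality of $TRT^t$, that the off-diagonal entries of $T(S_2^{ij})T^t$ vanish. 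Your route is shorter and more conceptual, and it generalizes without change to any $S_k^{ij}$ since $(S_k^{ij})$ is always a polynomial in $R$ (the paper's own remark before the lemma points to \cite{GS} for this). One caveat: your final paragraph on smoothness of $T$ in $\varepsilon$, and the Rellich--Kato discussion of possible eigenvalue degeneracy, addresses something the lemma does not actually claim -- the statement only asks for an orthogonal $T(x,\varepsilon)$ with $T|_{\varepsilon=0}=Id$, and the paper's proof likewise stops at observing that at $\varepsilon=0$ both matrices are already diagonal so $T=Id$ works; the subsequent ellipticity argument in Lemma 3.4 uses only the eigenvalues $\lambda_i(r(w))$, not regularity of $T$. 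So that extra material is harmless but not needed.
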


\begin{proof}
For $T=(T_{ij})$, we have
\begin{equation}\label{eq:lll1}
\sum^{3}_{i=1}T_{si}T_{ti}=\delta_{s}^{t}.
\end{equation}
Now we set $(r_{ij})$ can be diagonalized by $T$,
\begin{equation*}
(T_{ij})(r_{ij})\Prefix^{t}(T_{ij})=
\begin{pmatrix}
\lambda_{1}& &\\
&\lambda_{2}&\\
&&\lambda_{3}
\end{pmatrix}
=\left(\sum_{i,j=1}^{3}T_{si}T_{tj}r_{ij}\right)_{st}.
\end{equation*}
Thus, we have, when $s\neq t$
\begin{equation}\label{eq:llll}
\begin{split}
&\sum_{i,j=1}^{3}T_{si}T_{tj}r_{ij}\\
=&T_{s1}T_{t1}r_{11}+T_{s2}T_{t2}r_{22}+T_{s3}T_{t3}r_{33}+2T_{s3}T_{t1}r_{31}+2T_{s1}T_{t2}r_{12}+2T_{s3}T_{t2}r_{32}=0
\end{split}
\end{equation}
Now for
\begin{equation*}
\left(S^{ij}_{2}(r_{ij})\right)=
\begin{pmatrix}
r_{22}+r_{33}&-r_{21}&-r_{31}\\
-r_{12}&r_{11}+r_{33}&-r_{31}\\
-r_{13}&-r_{23}&r_{11}+r_{22}
\end{pmatrix},
\end{equation*}
we have
\begin{equation*}
(T_{ij})(r_{ij})\Prefix^{t}(T_{ij})=\left(\sum_{i,j=1}^{3}T_{si}T_{tj}S_{2}^{ij}\right)_{st}.
\end{equation*}
If we could prove that
$\sum_{i,j=1}^{3}\left(T_{si}T_{tj}S_{2}^{ij}\right)_{st}$ is a
diagonal matrix, our proof was done.

Indeed, when $s\neq t$, we have
\begin{equation}\label{eq:lll}
\begin{split}
&\sum_{i,j=1}^{3}T_{si}T_{tj}S^{ij}_{2}\\
=&T_{s1}T_{t1}(r_{22}+r_{33})+T_{s2}T_{t2}(r_{11}+r_{33})+T_{s3}T_{t3}(r_{11}+r_{22})\\
&-2T_{s1}T_{t2}r_{12}-2T_{s3}T_{t1}r_{31}-2T_{s3}T_{t2}r_{32}.
\end{split}
\end{equation}
By \eqref{eq:llll} and \eqref{eq:lll1}, \eqref{eq:lll} can be
$$\sum_{i,j=1}^{3}T_{si}T_{tj}S^{ij}_{2}=\sum_{i,j=1}^{3}T_{si}T_{tj}(r_{11}+r_{22}+r_{33})=0.$$
When $\varepsilon=0$, $S^{ij}_{2}[r(w)]$ and $(r_{ij}(w))$ are diagonal, thus, $T$ can be the identity matrix $Id$.
\end{proof}

From the view above, when $k=2$ and $f<0$, the corresponding Hessian operator
 is possible to be uniformly elliptic.  In this paper, we will study some uniformly
  elliptic $2$-Hessian equations which have non-positive right-hand functions $f$.

\section{Existence of $C^\infty$ local Solutions for uniformly elliptic case}\label{S3}

From now on, we fixed $n=3, k=2$, by a translation
$y\longrightarrow y-y_0$ and replacing $u$ by $u-u(0)-y\,\cdot\,Du(0),$
we can assume $Z_0=(0,0,0)$ in Theorem \ref{thm1}. We prove now the following results,
\begin{theorem}\label{th:M1}
Let $f\in C^{\infty}$ and $f(Z_0)=0$ for
$Z_0=(0, 0, 0)\in
\Omega\times\mathbb{R}\times\mathbb{R}^{3}$. Then
\eqref{eq:1.2} admits  a $1$-convex local solution $u\in C^{\infty}$
which is not 3-convex and is of  the following form
\begin{equation}\label{eq:1.9}
u(y)=\frac{1}{2}\sum^{3}_{i=1}\tau_{i}y^{2}_{i}+\varepsilon^{5}w(\varepsilon^{-2}
y),\indent \forall (\tau_{1},\tau_{2},\tau_{3})\in \mathbf{P}_{2}
\end{equation}
in the neighborhood of $y_0=0,$ $\|w\|_{C^{4,\alpha}}\leq 1$ and $\varepsilon>0$ very small.

If $f$ is nonnegative near $Z_0,$ then \eqref{eq:1.2} admits a $2$-convex
local solution $u\in C^{\infty}$ which is not 3-convex.
If $f(Z_0)>0$, then \eqref{eq:1.2} admits a $3$-convex
local solution $u\in C^{\infty}$.

Moreover, the equation \eqref{eq:1.2} is uniformly elliptic with respect to the solution
\eqref{eq:1.9}.
\end{theorem}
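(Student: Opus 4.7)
The strategy is to reduce \eqref{eq:1.2} to a small perturbation of a constant-coefficient uniformly elliptic linear equation via the two-scale structure of the ansatz \eqref{eq:1.9}. First I choose $\tau\in\mathbb{R}^{3}$ so that $\sigma_{2}(\tau)=f(Z_{0})$ and every coefficient $\sigma_{1,i}(\tau)$ of the linearized operator $\mathcal{L}:=\sum_{i=1}^{3}\sigma_{1,i}(\tau)\partial_{i}^{2}$ is strictly positive: in case (1) take $\tau\in\mathbf{P}_{2}$ (Lemma \ref{lm:tau1}); in case (2) with $f(Z_{0})>0$ invoke Lemma \ref{lemma:negative}(2) to place $\tau\in\Gamma_{2}\setminus\overline{\Gamma_{3}}$, while with $f(Z_{0})=0$ the same choice $\tau\in\mathbf{P}_{2}$ works; in case (3) use Lemma \ref{lemma:negative}(3) to land $\tau\in\Gamma_{3}$. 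Writing $\psi(y)=\tfrac{1}{2}\sum\tau_{i}y_{i}^{2}$ and $x=\varepsilon^{-2}y$, the ansatz gives $D^{2}u=\mathrm{diag}(\tau)+\varepsilon D_{x}^{2}w$ and the elementary expansion
$$
S_{2}[u]=\sigma_{2}(\tau)+\varepsilon\,\mathcal{L}[w]+\varepsilon^{2}S_{2}(D_{x}^{2}w).
$$

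Next, the arguments of $f(y,u,Du)$ after the substitution are $(\varepsilon^{2}x,\,\psi(\varepsilon^{2}x)+\varepsilon^{5}w,\,\varepsilon^{2}(\tau_{i}x_{i})_{i}+\varepsilon^{3}D_{x}w)$, of respective sizes $O(\varepsilon^{2})$, $O(\varepsilon^{4})$, $O(\varepsilon^{2})$. Since $f(Z_{0})=\sigma_{2}(\tau)$, Taylor expansion of $f$ about $Z_{0}$ gives $f-\sigma_{2}(\tau)=O(\varepsilon^{2})$, uniformly on bounded sets of $x$ and for $w$ in the closed unit ball of $C^{4,\alpha}(\overline{B_{1}})$. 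Dividing \eqref{eq:1.2} by $\varepsilon$ thus recasts the problem as the fixed-point equation $\mathcal{L}[w]=F_{\varepsilon}(w)$, where
$$
F_{\varepsilon}(w):=\frac{1}{\varepsilon}\bigl[f\bigl(\varepsilon^{2}x,\,\psi(\varepsilon^{2}x)+\varepsilon^{5}w,\,D\psi(\varepsilon^{2}x)+\varepsilon^{3}D_{x}w\bigr)-\sigma_{2}(\tau)\bigr]-\varepsilon\,S_{2}(D_{x}^{2}w),
$$
and $F_{\varepsilon}$ sends the unit ball of $C^{4,\alpha}(\overline{B_{1}})$ into a ball of radius $O(\varepsilon)$ in $C^{2,\alpha}(\overline{B_{1}})$ with Lipschitz constant $O(\varepsilon)$. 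Because $\mathcal{L}$ is a constant-coefficient uniformly elliptic linear operator, the Dirichlet problem $\mathcal{L}\tilde{w}=g$ on $B_{1}$ with $\tilde{w}|_{\partial B_{1}}=0$ admits a unique solution satisfying the Schauder estimate $\|\tilde{w}\|_{C^{4,\alpha}}\le C\|g\|_{C^{2,\alpha}}$. Hence $Tw:=\mathcal{L}^{-1}F_{\varepsilon}(w)$ is a contraction self-map of the unit ball of $C^{4,\alpha}(\overline{B_{1}})$ for sufficiently small $\varepsilon>0$, and Banach's theorem yields a fixed point $w$, whence the corresponding $u$ solves \eqref{eq:1.2} on $\varepsilon^{2}B_{1}$. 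Uniform ellipticity at $u$ is inherited from that of $\mathcal{L}$ because $D^{2}u=\mathrm{diag}(\tau)+O(\varepsilon)$, and standard elliptic bootstrap upgrades $u$ to $C^{\infty}$.

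For the convexity conclusions, the relation $\lambda(D^{2}u)=\tau+O(\varepsilon)$ transports the relevant cone properties from $\tau$ to $\lambda(D^{2}u)$: in case (1), $\sigma_{1}(\tau)>0$ yields 1-convexity while $\sigma_{3}(\tau)<0$ rules out 3-convexity; in case (2), $f\ge 0$ forces $S_{2}[u]=f\ge 0$, which combined with $\sigma_{1}(D^{2}u)>0$ gives 2-convexity, and $\tau\notin\overline{\Gamma_{3}}$ again rules out 3-convexity; in case (3), $\tau\in\Gamma_{3}$ directly gives 3-convexity. The principal technical obstacle I anticipate is the scaling bookkeeping in the middle step: one must verify carefully that the $\varepsilon^{-1}$ factor in $F_{\varepsilon}$ is absorbed by the cancellation $f(Z_{0})=\sigma_{2}(\tau)$ together with the respective orders $O(\varepsilon^{2}),O(\varepsilon^{4}),O(\varepsilon^{2})$ of the three arguments of $f$, and that the $w$- and $Dw$-dependence contributes only $O(\varepsilon^{2})$ to the Lipschitz constant of $F_{\varepsilon}$, so that the contraction factor genuinely vanishes with $\varepsilon$.
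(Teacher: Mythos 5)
Your proposal is correct, and it takes a genuinely different route from the paper. The paper runs a Newton-type iteration (scheme \eqref{eq:2.6}): at each step it linearizes $G$ at the current iterate $w_m$, producing a variable-coefficient elliptic operator $L_G(w_m)$, and proves quadratic convergence $\|g_{k+1}\|\lesssim\|g_k\|^2$ (Proposition \ref{nms}); this requires Lemma \ref{lemma:schauder} for uniform Schauder estimates on the iterate-dependent operator, including careful handling of the zeroth-order coefficient $a=-\varepsilon^4\partial_z f$ whose sign is unknown, and Lemma \ref{eq:3.4} for ellipticity of $L_G(w)$. You instead exploit the exactness of the quadratic expansion $S_2(\mathrm{diag}(\tau)+\varepsilon M)=\sigma_2(\tau)+\varepsilon\sum_i\sigma_{1,i}(\tau)m_{ii}+\varepsilon^2 S_2(M)$ to recast the problem as $\mathcal{L}w=F_\varepsilon(w)$ with $\mathcal{L}$ a \emph{fixed} constant-coefficient uniformly elliptic operator, verify that $F_\varepsilon$ maps the unit $C^{4,\alpha}$-ball into an $O(\varepsilon)$-ball of $C^{2,\alpha}$ with $O(\varepsilon)$ Lipschitz constant, and close with a single Banach fixed-point argument. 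This is genuinely simpler: it bypasses variable-coefficient Schauder theory, the sign-of-$a$ difficulty, and the quadratic-convergence bookkeeping, at the cost of only linear (but $O(\varepsilon)$-rate) convergence. Since the linearization here is uniformly elliptic with no loss of derivatives, Newton's quadratic gain is not needed; the paper's scheme is inherited from the Lin and Hong--Zuily papers, where degeneracy of the linearized operator makes a Nash--Moser-type device essential, but in the present uniformly elliptic setting your contraction argument is the more economical one. Your choices of $\tau$ (via Lemmas \ref{lm:tau1} and \ref{lemma:negative}) and the resulting convexity conclusions from $\lambda(D^2u)=\tau+O(\varepsilon)$ match the paper's.
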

Remark that, in Theorem \ref{th:M1} the function $f$ is permitted
to change sign. It is well known that, for Monge-Ampere operator,
the type of equation is determined by the sign of $f(y,u,Du)$, it is elliptic if
$f>0$, hyperbolic if $f<0$ and degenerate elliptic or hyperbolic  if
$f$ vanishes; it is of mixed type if $f$ changes sign \cite{Han}. So that
Theorem \ref{th:M1} never occurs in Monge-Amp\'{e}re case.

Theorem \ref{th:M1} is exactly the part (1) and (2) of Theorem \ref{thm1}.

Let $\tau=(\tau_1,\tau_2,\tau_3)\in\mathbf{P}_{2}$, then
$\psi(y)=\frac{1}{2}\sum_{i=1}^3\tau_i y_i^2$ is a polynomial-type solution
of
$$
S_2[\psi]=0,
$$
we follow Lin \cite{Lin} to introduce the following function
$$%\begin{equation}\label{eq:2.1}
u(y)=\frac{1}{2}\sum_{i=1}^3\tau_i
y_i^2+\varepsilon^{5}w(\varepsilon^{-2}y)=\psi(y)+\varepsilon^{5}w(\varepsilon^{-2}y),\indent
\tau\in\mathbf{P}_{2},\,\,\varepsilon>0,
$$%\end{equation}
as a candidate of solution for equation \eqref{eq:1}. Noting $y=\varepsilon^2 x,$
we have
$$
(D_{y_j} u)(x)=\tau_j \varepsilon^2 x_j+\varepsilon^3 w_j(x),\indent j=1, \cdots, 3,
$$
and
$$
(D_{y_jy_k} u)(x)=\delta^j_k \tau_j +\varepsilon w_{jk}(x),\indent
j, k=1, \cdots, 3,
$$
where $\delta^j_k$ is the Kronecker symbol, $w_j(x)=(D_{y_j} w)(x)$ and $w_{jk}(x)=(D^2_{y_{jk}} w)(x)$. Then \eqref{eq:1.2} transfers  to
$$%\begin{equation}\label{eq:2.2}
\tilde{S}_{2}(w)=\tilde{f}_\varepsilon(x,w(x),Dw(x)),\indent x\in B_1(0)=\{x\in
\mathbb{R}^{3}; |x|< 1\}
$$%\end{equation}
where
$$%\begin{equation}\label{eq:2.2b}
\tilde{S}_{2}(w)=S_{2}(\delta_{i}^{j}\tau_i+\varepsilon
w_{ij}(x))=S_{2}(r(w)),
$$%\end{equation}
with symmetric matrix $r(w)=(\delta_{i}^{j}\tau_i+\varepsilon w_{ij}(x))$, and
$$
\tilde{f}_\varepsilon(x,w(x),Dw(x))=f(\varepsilon^{2}x,
\varepsilon^{4}\psi(x)+\varepsilon^{5}w(x),\tau_1 \varepsilon^2 x_1+\varepsilon^3 w_1(x),
\cdots, \tau_3 \varepsilon^2 x_3+\varepsilon^3 w_3(x)).
$$
Similar to \cite{Lin} we consider the nonlinear operators
\begin{equation}\label{eq:2.3}
G(w)=\frac{1}{\varepsilon}[{S}_{2}(r(w))-\tilde{f_\varepsilon}(x, w,
Dw)],\qquad \mbox{on}\,\,\, B_1(0).
\end{equation}
The linearized operator of $G$ at $w$ is
\begin{equation}\label{eq:2.4}
L_{G}(w)=\sum_{i,j=1}^{3}\frac{\partial S_{2}(r(w))}{\partial
r_{ij}}\partial^2_{ij}+\sum_{i=1}^{3}a_{i}\partial_{i}+a,
\end{equation}
where
$$
a_{i}=-\frac{1}{\varepsilon}\frac{\partial
\widetilde{f_\varepsilon}(x,z,p_{i})}{\partial
p_{i}}(x,w,Dw)=-\varepsilon^2\frac{\partial f}{\partial p_{i}}$$
$$
a=-\frac{1}{\varepsilon}\frac{\partial
\widetilde{f_\varepsilon}(x,z,p_{i})}{\partial
z}(x,w,Dw)=-\varepsilon^{4}\frac{\partial f}{\partial z}.
$$
Hereafter, we denote $S_{2}^{ij}(r(w))=\frac{\partial
S_{2}(r(w))}{\partial r_{ij}}$. Since $S_{2}(r(w))=\sigma_{2}(\lambda(r(w)))$
is invariant under orthogonal transformation, by using Lemma \ref{lemma:linear},
the matrix $\left(S^{ij}_{2}(r(w))\right)$ and $(r(w))$ can be diagonalized simultaneously, that is,  for any smooth function $w$, we can find  an orthogonal
matrix $T(x,\varepsilon)$ satisfying
$$%\begin{equation}\label{eq:2.5}
\left\{
\begin{array}{l}
T(x,\varepsilon)\left(S_{2}^{ij}(r(w))\right)\,\,\Prefix ^{t}{T(x,\varepsilon)}=
\textup{diag}\left[\frac{\partial \sigma_{2}(\lambda(r(w)))}{\partial
\lambda_{1}}, \frac{\partial\sigma_{2}(\lambda(r(w))}{\partial
\lambda_{2}},\frac{\partial\sigma_{2}(\lambda(r(w)))} {\partial
\lambda_{3}}\right]\\
T(x,\varepsilon)\left(r_{ij}(r(w))\right)\,\,\Prefix^{t}T(x,\varepsilon)=\textup{diag}
\left[\lambda_{1}(r(w)),
\lambda_{2}(r(w)),\lambda_{3}(r(w))\right],
\end{array}\right.
$$%\end{equation}
where $\Prefix^{t}{T(x,\varepsilon)}$ is the transpose of
${T(x,\varepsilon)}$. Since $T$ is not unique, we set
$T(x,\varepsilon)\mid_{\varepsilon=0}=Id$. After this
transformation, in order to prove the uniform ellipticity of
$L_G(w)$
$$\sum_{i,j=1}^nS_{2}^{ij}(r(w)\xi_i\xi_j\geq c|\xi|^2,\indent \forall (x,\xi)\in B_1(0)\times R^3$$
instead we can  prove that , by setting $\xi=\Prefix
^{t}{T(x,\varepsilon)}\tilde{\xi}$,
$$\sum^3_{j=1}\frac{\partial \sigma_{2}(\lambda(r(w)))}{\partial
\lambda_{j}}|\tilde{\xi}_j|^2\geq c|\tilde{\xi}|^2,$$

for some $c>0$,  where
\begin{align*}
&\frac{\partial \sigma_{2}(\lambda(r(w)))}{\partial
\lambda_{1}}=\sigma_{1,1}(\lambda(r(w)))=\lambda_2(r(w))+\lambda_3(r(w)), \\
&\frac{\partial \sigma_{2}(\lambda(r(w)))}{\partial
\lambda_{2}}=\sigma_{1,2}(\lambda(r(w)))=\lambda_1(r(w))+\lambda_3(r(w)),\\
&\frac{\partial \sigma_{2}(\lambda(r(w)))}{\partial
\lambda_{3}}=\sigma_{1,3}(\lambda(r(w)))=\lambda_1(r(w))+\lambda_2(r(w)).
\end{align*}
\begin{lemma}\label{eq:3.4}
Assume that $\tau\in\mathbf{P}_{2}$ and $\|w\|_{C^{2}(B_1(0))}\leq 1$,
 then the operator $L_{G}(w)$ is a uniformly elliptic operator if $\varepsilon$ is small enough.
\end{lemma}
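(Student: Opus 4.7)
The plan is to analyze the principal coefficients $(S_2^{ij}(r(w)))$ of $L_G(w)$ as a small perturbation of the situation at $\varepsilon = 0$, where $r(w)$ reduces to $\textup{diag}(\tau)$. The lower-order terms $a_i\partial_i + a$ in \eqref{eq:2.4} carry explicit factors $\varepsilon^2$ and $\varepsilon^4$ and therefore play no role in the ellipticity question, so the whole matter reduces to showing that the symmetric matrix $(S_2^{ij}(r(w)))$ is uniformly positive definite on $B_1(0)$ once $\varepsilon$ is sufficiently small.

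First I would invoke the simultaneous diagonalization in Lemma \ref{lemma:linear} to reduce the uniform positivity of $(S_2^{ij}(r(w)))$ to the three scalar lower bounds $\partial \sigma_2(\lambda(r(w)))/\partial \lambda_j \geq c > 0$ for $j=1,2,3$, where $\lambda(r(w))$ denotes the eigenvalue vector of the symmetric matrix $r(w) = \textup{diag}(\tau) + \varepsilon(w_{ij}(x))$. Since in dimension three each such partial derivative is simply the sum of the two other eigenvalues, the problem becomes: find $c>0$ so that every pairwise sum $\lambda_k(r(w)) + \lambda_\ell(r(w))$, $k\neq\ell$, is bounded below by $c$, uniformly in $x \in B_1(0)$.

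At $\varepsilon = 0$ the matrix $r(w)$ is exactly $\textup{diag}(\tau)$, its eigenvalues are $\tau_1,\tau_2,\tau_3$, and the three pairwise sums coincide with $\sigma_{1,j}(\tau)$. By Lemma \ref{lm:tau1}, since $\tau \in \mathbf{P}_2$, these are all strictly positive with smallest value $\sigma_{1,1}(\tau) = \tau_2 + \tau_3 > 0$, a number that depends only on $\tau$. To propagate this positivity to $\varepsilon > 0$, I would appeal to Weyl's eigenvalue-perturbation inequality: since $r(w) - \textup{diag}(\tau) = \varepsilon(w_{ij}(x))$ has operator norm bounded by a constant multiple of $\varepsilon\|w\|_{C^2(B_1(0))} \leq \varepsilon$, one gets $|\lambda_j(r(w))(x) - \tau_j| \leq C\varepsilon$ for every $j$ and every $x \in B_1(0)$. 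Consequently each pairwise sum differs from the corresponding $\sigma_{1,j}(\tau)$ by at most $2C\varepsilon$, and choosing $\varepsilon$ small enough (depending only on $\tau$) yields the uniform lower bound $\tfrac{1}{2}\sigma_{1,1}(\tau) > 0$, which is exactly the required ellipticity constant.

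The only mildly delicate point is to make sure the eigenvalue estimate is uniform in $x \in B_1(0)$; this is automatic from the hypothesis $\|w\|_{C^2(B_1(0))} \leq 1$, which controls the perturbation matrix $\varepsilon(w_{ij}(x))$ uniformly. No further information about $f$ or about the $k$-convexity of $w$ is needed, because the essential positivity $\sigma_{1,j}(\tau) > 0$ on $\mathbf{P}_2$ has already been isolated in Lemma \ref{lm:tau1}, and Lemma \ref{lemma:linear} has already converted the matrix-level ellipticity question into a statement about the three eigenvalues.
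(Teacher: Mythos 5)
Your proposal is correct, and it reaches the key estimate by a genuinely shorter route than the paper. Both arguments share the same skeleton: the lower-order terms $a_i\partial_i+a$ are irrelevant since they only carry $\varepsilon^2$ and $\varepsilon^4$ factors, Lemma \ref{lemma:linear} reduces positivity of the principal coefficient matrix $\bigl(S_2^{ij}(r(w))\bigr)$ to lower bounds on the pairwise sums $\lambda_k(r(w))+\lambda_\ell(r(w))$, and Lemma \ref{lm:tau1} supplies the strict positivity $\tau_k+\tau_\ell>0$ for $\tau\in\mathbf P_2$, so that it only remains to show the eigenvalues of $r(w)=\mathrm{diag}(\tau)+\varepsilon(w_{ij})$ are $O(\varepsilon)$-perturbations of $\tau$, uniformly in $x\in B_1(0)$. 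Where you differ is in how this perturbation statement (the paper's \eqref{eq:lm3.4}) is proved: you invoke Weyl's eigenvalue perturbation inequality, using $\|\varepsilon(w_{ij}(x))\|\le C\varepsilon\|w\|_{C^2}\le C\varepsilon$, which yields $|\lambda_j(r(w))-\tau_j|\le C\varepsilon$ for the descending-ordered eigenvalues in one stroke (it is worth saying explicitly that Weyl pairs the \emph{sorted} eigenvalues, which is harmless here since $\tau_1\ge\tau_2>0>\tau_3$); the paper instead argues by hand, expanding the characteristic polynomial as $\prod_i(\tau_i-\lambda)+R(w,\varepsilon)$ with $R=O(\varepsilon)$, locating $\lambda_3$ near $\tau_3$ by the intermediate value theorem, and then using the orthogonal invariance of the trace and of $S_2$ to deduce $\lambda_1+\lambda_2=\tau_1+\tau_2+O(\varepsilon)$ and $\lambda_1\lambda_2=\tau_1\tau_2+O(\varepsilon)$, hence the individual asymptotics. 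Your route is more economical and gives uniformity in $x$ automatically; the paper's route is elementary and self-contained (no appeal to min--max or Weyl), at the cost of a longer case analysis, and it produces the same individual eigenvalue expansions that your Weyl step also delivers directly.
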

\begin{proof}
To prove the operator $L_{G}(w)$ is a uniformly
elliptic operator, it suffices to prove
\begin{equation}\label{eq:lm3.4}
\lambda_{i}(r(w))+\lambda_{j}(r(w))=\tau_{i}+\tau_{j}+O(\varepsilon),\indent i,j=1,2,3,\indent i\neq j.
\end{equation}
Indeed, for $\tau \in\mathbf{P}_{2} $ and Lemma \ref{lm:tau1}
 give $\tau_{i}+\tau_{j}>0$. Thus, for $\varepsilon$ small enough, \eqref{eq:lm3.4} imply,
$$
\lambda_{i}+\lambda_{j}\ge \frac{\tau_{i}+\tau_{j}}{2}>0, \indent
i\neq j
$$
$L_{G}(w)$ is then a uniformly elliptic operator.

Next, we prove \eqref{eq:lm3.4}. By our choice of $r_{ij}(w)$,
\begin{gather*}
r(w)=(r_{ij}(w))=
\begin{pmatrix}
\tau_{1}+\varepsilon w_{11}&\varepsilon w_{12}& \varepsilon w_{13}\\
\varepsilon w_{21}&\tau_{2}+\varepsilon w_{22}&\varepsilon w_{23}\\
\varepsilon w_{31}&\varepsilon w_{32}&\tau_{3}+\varepsilon w_{33}
\end{pmatrix},
\end{gather*}
we write its characteristic  polynomial as
$$
g(\lambda)=\det (r(w)-\lambda\,  {\bf I})=\prod_{i=1}^{3}(\tau_{i}-\lambda_{i})+R(w,\varepsilon)
$$
where
$$
R(w,\varepsilon)=\sum^3_{j=1}\varepsilon R_j(w,\varepsilon)+
\sum_{j, k}\varepsilon^2 R_{jk}(w,\varepsilon).
$$
For any $\|w\|_{C^2(B_1(0))}\leq 1$ and $0<\varepsilon\le 1$
$$
|R_j(w,\varepsilon)|\leq C,\indent |R_{jk}(w,\varepsilon)|\leq C
$$
with $C$ being independent of $x$ and $\varepsilon$. We have also
\begin{equation}\label{eq:S1-S2}
S_1(r(w))=\sigma_{1}(\tau) +\varepsilon S_1(w),\qquad S_2(r(w))=\sigma_{2}(\tau)+\varepsilon \tilde{R}_1(w,\varepsilon),
\end{equation}
and
$$%\begin{equation}\label{eq:S3}
\det (r(w))=\sigma_{3}(\tau)+\varepsilon \tilde{R}_2(w,\varepsilon),
$$%\end{equation}
where for any $\|w\|_{C^2(B_1(0))}\leq 1$ and $0<\varepsilon\le 1$
$$
|\tilde{R}_j(w,\varepsilon)|\leq C,\indent |S_{1}(w)|\leq C.
$$
By using Lemma \ref{lm:tau1}, we have $\tau_{3}<0<\tau_{2}\leq \tau_{1}$,
then for $0<\varepsilon\ll|\tau_{3}|$, we have
$$
g(\frac{3}{4}\tau_{3})=(\tau_{1}-\frac{3}{4}\tau_{3})
(\tau_{2}-\frac{3}{4}\tau_{3})(\frac{\tau_{3}}{4})+R(w,\varepsilon)<0,
$$
$$
g(\frac{5}{4}\tau_{3})=(\tau_{1}-\frac{5\tau_{3}}{4})(\tau_{2}-
\frac{5\tau_{3}}{4})(-\frac{\tau_{3}}{4})+R(w,\varepsilon)>0,
$$
and we see that, by the virtue of Intermediate Value Theorem, there exists an
eigenvalue, denoted by $\lambda_3,$ such that
$$
\frac{3}{4}\tau_{3}>\lambda_3>\frac{5}{4}\tau_{3},\ \ g(\lambda_3)=0.
$$
From $0=g(\lambda_3)=(\tau_{1}-\lambda_{3})(\tau_{2}-\lambda_{3})(\tau_{3}
-\lambda_{3})+R(w,\varepsilon)$ and
$$(\tau_{1}-\frac{5\tau_{3}}{4})
(\tau_{2}-\frac{5\tau_{3}}{4})>(\tau_{1}-\lambda_{3})(\tau_{2}-\lambda_{3})>(\tau_{1}-\frac{3\tau_{3}}{4})
(\tau_{2}-\frac{3\tau_{3}}{4}),$$ it follows that
$$
\lambda_{3}=\tau_{3}+\mathcal{O}_{1}(w,\varepsilon).
$$
Since the trace of a matrix is invariant under the
orthogonal transformation, then
$$
\lambda_1(w)+\lambda_2(w)+\lambda_3(w)=\sigma_{1}(\tau)+\varepsilon(w_{11}+w_{22}
+w_{33}),
$$
from which we see that
$$
\lambda_1(w)+\lambda_2(w)=\tau_{1}+\tau_{2}+\mathcal{O}_2(w,\varepsilon).
$$
Using
$$
\sigma_{2}(\tau)+\varepsilon \tilde{R}_1(w,\varepsilon)=S_2(r(w))=
\sigma_2(\lambda(r(w)))=\lambda_3(w)(\lambda_1(w)+\lambda_2(w))
+\lambda_1(w)\lambda_2(w),
$$
we obtain
$$\lambda_1\lambda_2=\tau_{1}\tau_{2}+\mathcal{O}_3(w,\varepsilon),$$
which yields     either
$$
\lambda_1=\tau_{1}+\mathcal{O}_{4}(w,\varepsilon),\indent
\lambda_2=\tau_{2}+\mathcal{O}_{5}(w,\varepsilon)
$$
or
$$
\lambda_1=\tau_{2}+\mathcal{O}_{5}(w,\varepsilon),\indent
\lambda_2=\tau_{1}+\mathcal{O}_{4}(w,\varepsilon)
$$

 and then \eqref{eq:lm3.4} is proven. Proof is done.
\end{proof}

We follows now the idea of Hong and Zuily \cite{HZ} to prove the
existence and a priori estimates of solution for linearized
operator. In our case, although $L_{G}(w)$ is uniformly elliptic,
the existence and a priori Schauder estimates of classical solutions
are not directly obtainable, because we do not know whether the
coefficient $a$ of $au$ in \eqref{eq:2.4} is non-positive. If we can
prove the existence (Lemma 3.3),  we can  employ Nash-Moser
procedure to prove the existence of local solution for
\eqref{eq:1.2} in H\"{o}lder space rather than Sobolev space. One
goal is to see how the procedure depends on the condition
$\|w_{k}\|_{C^{4,\alpha}}\leq A$. We shall use the following schema:
\begin{equation}\left\{ \label{eq:2.6}
\begin{array}{l}
w_{0}=0,\indent w_{m}=w_{m-1}+\rho_{m-1},\,\, m\ge 1,\\
L_{G}(w_{m})\rho_{m}=g_{m},\text {in}\ \ B_1(0),\\
\rho_{m}=0\indent \text{on}\ \  \partial B_1(0),\\
g_{m}=-G(w_{m})\, ,\\
\end{array}\right.
\end{equation}
where
$$
g_0(x)=\frac{1}{\varepsilon}\Big(\sigma_2(\tau)-f\big(\varepsilon^2 x, \varepsilon^4
\psi(x), \varepsilon^2(\tau_1 x_1,\tau_2 x_2,\tau_3 x_3)\big)\Big)\, .
$$

It is pointed out on page 107, \cite{GT} that, if the operator
$L_{G}$ does not satisfy the condition $a\leq 0,$ as is well known
from simple examples, the Dirichlet problem for $L_{G}(w)\rho=g$ no
longer has a solution in general. Notice $a$ in \eqref{eq:2.6c} has
the factor $\varepsilon^4$, we will take advantage of smallness of
$a$ to obtain the uniqueness and existence of solution for Dirichlet
problem \eqref{eq:2.6c} and then uniformly Schauder estimates of its
solution follows.

\begin{lemma}\label{lemma:schauder}
Assume that $\|w\|_{C^{4, \alpha}(B_1(0))}\le A$. Then there exists
a unique solution  $\rho\in C^{2, \alpha}(\overline{B_1(0)})$ to the
following Dirichlet problem
\begin{equation}\left\{ \label{eq:2.6b}
\begin{array}{l}
L_{G}(w)\rho=g,\quad \text {in}\ \ B_1(0),\\
\rho=0\indent \text{on}\ \  \partial B_1(0)\,
\end{array}\right.
\end{equation}
for all $g\in  C^{\alpha}(\overline{B_1(0)}).$ Moreover,
\begin{equation}\label{eq:2.6h}
\|\rho\|_{C^{4, \alpha}(\overline{B_1(0)})}\le C\|g\|_{C^{2,
\alpha}(\overline{B_1(0)})}, \quad \forall g\in C^{2,
\alpha}(\overline{B_1(0)}),
\end{equation}
where the constant $C$ depends on $A, \tau$ and
$\|f\|_{C^{4,\alpha}}$. Moreover, $C$ is unform for  $0<\varepsilon\le \varepsilon_0$
 for some $\varepsilon_0>0$.
\end{lemma}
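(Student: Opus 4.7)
The plan is to exploit the smallness of the zeroth-order coefficient $a=-\varepsilon^{4}\partial_{z}f$, which is precisely what prevents a direct appeal to the standard existence theorems for the Dirichlet problem, and to turn \eqref{eq:2.6b} into a contraction mapping.

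First, I would collect the regularity of the coefficients of $L_{G}(w)$. Since $\|w\|_{C^{4,\alpha}(B_{1}(0))}\le A$, the second derivatives $w_{ij}$ lie in $C^{2,\alpha}$, hence the principal coefficients $S_{2}^{ij}(r(w))$, which depend polynomially on $r_{k\ell}=\delta_{k}^{\ell}\tau_{k}+\varepsilon w_{k\ell}$, are in $C^{2,\alpha}$. The coefficients $a_{i}$ and $a$ are smooth functions evaluated on $C^{3,\alpha}$ arguments, so they are also $C^{2,\alpha}$, with norms bounded in terms of $A$, $\tau$ and $\|f\|_{C^{4,\alpha}}$. By Lemma \ref{eq:3.4} the principal part is uniformly elliptic with constants depending only on $\tau$, as long as $\varepsilon\le\varepsilon_{0}(A,\tau)$ is sufficiently small.

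Second, I would introduce the auxiliary operator $L_{0}=L_{G}(w)-a\,\mathrm{Id}$, whose zeroth-order coefficient vanishes identically and therefore fits the hypothesis $c\le 0$ of the classical Schauder theory on the smooth domain $B_{1}(0)$. Gilbarg--Trudinger, Theorems 6.14 and 6.19, yield a bounded solution operator $T$ for the Dirichlet problem $L_{0}u=h$, $u|_{\partial B_{1}}=0$, with
$$\|Th\|_{C^{2,\alpha}}\le C_{0}\|h\|_{C^{\alpha}},\qquad \|Th\|_{C^{4,\alpha}}\le C_{1}\|h\|_{C^{2,\alpha}},$$
and constants $C_{0},C_{1}$ uniform in $w$ as long as $\|w\|_{C^{4,\alpha}}\le A$ and $\varepsilon\le\varepsilon_{0}$. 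Rewriting \eqref{eq:2.6b} as $L_{0}\rho=g-a\rho$ gives the fixed-point equation $\rho=T(g-a\rho)=:\Phi(\rho)$.

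Third, since $\|a\|_{C^{2,\alpha}}\le C_{f}\varepsilon^{4}$, the map $\Phi$ is a contraction:
$$\|\Phi(\rho_{1})-\Phi(\rho_{2})\|_{C^{4,\alpha}}=\|T(a(\rho_{2}-\rho_{1}))\|_{C^{4,\alpha}}\le C_{1}C_{f}\varepsilon^{4}\|\rho_{1}-\rho_{2}\|_{C^{4,\alpha}},$$
a contraction on $C^{4,\alpha}_{0}(\overline{B_{1}(0)})$ once $\varepsilon_{0}$ is chosen so that $C_{1}C_{f}\varepsilon_{0}^{4}\le 1/2$. The Banach fixed-point theorem then provides the unique solution of \eqref{eq:2.6b}, and inserting $\rho=\Phi(\rho)$ gives
$$\|\rho\|_{C^{4,\alpha}}\le C_{1}\|g\|_{C^{2,\alpha}}+\tfrac{1}{2}\|\rho\|_{C^{4,\alpha}},$$
which absorbs to \eqref{eq:2.6h}. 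Running the same argument in $C^{2,\alpha}$ instead of $C^{4,\alpha}$ handles the existence/uniqueness statement for general $g\in C^{\alpha}$.

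The main obstacle is the indefinite sign of $a$: without it the claim would be a one-line application of Schauder theory. The decisive feature is that the explicit factor $\varepsilon^{4}$ built into the zeroth-order term of $L_{G}(w)$ makes the perturbation off of $L_{0}$ small enough to be absorbed by a contraction, uniformly for $\varepsilon\le\varepsilon_{0}$, which is exactly what keeps the dependence of the constant $C$ confined to $A$, $\tau$ and $\|f\|_{C^{4,\alpha}}$.
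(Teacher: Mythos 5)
Your proof is correct, but it takes a genuinely different route from the paper's. The paper does not split off the zeroth-order term; instead it keeps $L_G(w)$ intact and invokes Gilbarg--Trudinger Theorem 3.7 (maximum principle with $c=0$) followed by Corollary 3.8, which handles a zeroth-order coefficient of indefinite sign at the cost of a prefactor $C_1=1-C\sup|a|/\mu(\tau)$; the smallness of $\varepsilon$ is used only to guarantee $C_1>1/2$. Having a uniform sup bound, the paper deduces that the homogeneous Dirichlet problem has only the trivial solution, then appeals to the Fredholm alternative (GT Theorem 6.15) for existence in $C^{2,\alpha}$ for $g\in C^\alpha$, and finally uses GT Theorem 6.19 and Problem 6.2 to bootstrap to the $C^{4,\alpha}$ estimate. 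Your approach buys a more compact argument by reducing everything to a single contraction $\rho\mapsto T(g-a\rho)$ on the good operator $L_0=L_G(w)-a\,\mathrm{Id}$: existence, uniqueness, and the a priori estimate all fall out of the Banach fixed-point theorem at once, and you never need the Fredholm alternative or the sign-unrestricted maximum principle of GT Corollary 3.8. The paper's approach, on the other hand, keeps the Schauder estimate in the form of the explicit inequality $\sup|\rho|\le (C/C_1\mu(\tau))\|g\|_{C^0}$ that it then plugs into the global Schauder bound, which may be slightly more transparent about how the constant degrades as $\varepsilon_0$ is enlarged. One small point you leave implicit: uniqueness of the fixed point in $C^{2,\alpha}_0$ requires noting that any $C^{2,\alpha}$ solution of the Dirichlet problem necessarily satisfies the fixed-point equation, since $T$ is the (unique) solution operator for $L_0$ on the same space; this is immediate but worth stating.
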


By virtue of \eqref{eq:2.4}, we write \eqref{eq:2.6b} as
\begin{equation}\left\{\label {eq:2.6c}
\begin{array}{l}
L_{G}(w)\rho=\sum_{i,j=1}^{3}\frac{\partial S_{2}(r(w))}{\partial
r_{ij}}\partial_i\partial_j\rho+\sum_{i=1}^{3}a_{i}\partial_{i}\rho+a\rho=g,\quad
\text {in}\ \ B_1(0),\\
 \rho=0\indent \text{on}\ \  \partial B_1(0)\,
\end{array}\right.
\end{equation}
where
$$
a_{i}=-\varepsilon^2\frac{\partial  f}{\partial p_{i}},\quad
a=-\varepsilon^{4}\frac{\partial f}{\partial z}.
$$
Notice that for $\frac{\partial S_{2}(r(w))}{\partial r_{ij}}$,
$a_i=a_i(x,w(x),Dw(x))$, $a=a(x,w(x),Dw(x))$ and
$g_m=-G(w_m)=g_m(x,w(x),Dw(x),D^2w(x))$  by \eqref{eq:2.6}, we
regard them as the functions with variable $x$. In a word, we regard
that all of the coefficients and non-homogeneous term in
\eqref{eq:2.6c} are functions of variable $x.$ For example,
$$
\tilde{f}_\varepsilon(x,w(x),Dw(x))=f(\varepsilon^{2}x,
\varepsilon^{4}\psi(x)+\varepsilon^{5}w(x),\tau_1 \varepsilon^2
x_1+\varepsilon^3 w_1(x), \cdots, \tau_3 \varepsilon^2
x_3+\varepsilon^3 w_3(x)),
$$
and
 $$\|\tilde{f}_\varepsilon\|_{C^3}=\sup\left\{|D^{\beta}_x[\tilde{f}_\varepsilon]|,
 |0\leq \beta\leq 3,x\in  B_1(0)\right\}$$
$$\|\tilde{f}_\varepsilon\|_{C^{3,\alpha}}=\|\tilde{f}_\varepsilon\|_{C^3}+
\sup\left\{\frac{|D^{\beta}_x[\tilde{f}_\varepsilon](x)-D^{\beta}_x[\tilde{f}_\varepsilon](z)|}{|x-z|^\alpha},|\beta|=3,x\neq
z\in  B_1(0)\right\}$$

When we regard $\tilde{f}_\varepsilon$ as a function of variable
$x,$ usually $\|f\|_{C^{3,\alpha}}$ is denoted as
$\|f\|_{C^{3,\alpha}(B_1(0))}$, but it maybe cause confusion because
 it must be involved in $D^\alpha w, 0\leq |\alpha|\leq 3$ as above.
 Therefore, here and after, we denote the norm as $\|\tilde{f}_\varepsilon\|_{C^3}$,
  $\|\tilde{f}_\varepsilon\|_{C^{3,\alpha}}$ as above,
 by dropping $B_1(0).$

\begin{proof}
 Let the constant $\mu(\tau)=\inf\left\{ \frac{\partial
\sigma_{2}(\lambda(r(w)))}{\partial \lambda_{i}}:\|w\|_{C^{4,
\alpha}(B_1(0))}\le A,i=1,2,3,\right\}.$ By Lemma 3.2,
$\mu(\tau)>0$. Applying Theorem 3.7 \cite{GT} to the solution $u\in
C^0(\overline{B_1(0)})\cap C^2(B_1(0))$ of
$$%\begin{equation}
\left\{
\begin{array}{l}
L_{G}(w)u=\sum_{i,j=1}^{3}\frac{\partial S_{2}(r(w))}{\partial
r_{ij}}\partial_i\partial_ju+\sum_{i=1}^{3}a_{i}\partial_{i}u=g,\quad
\text {in}\ \ B_1(0),\\
 u=0\indent \text{on}\ \  \partial B_1(0)\,
\end{array}\right.
$$%\end{equation}
we have
\begin{equation}\label{eq:2.6e}
\sup |u|\leq \frac{C}{\mu(\tau)}\|g\|_{C^0(\overline{B_1(0)})},
\end{equation}
where $C=\exp ^{2(\beta+1)}-1$ and
$\beta=\sup\left\{\frac{|a_i|}{\mu(\tau)}: i=1,2,3.\right\}$

Let $C_1=1-C\sup\frac{|a|}{\mu(\tau)}$ with $C$ being the constant
in \eqref{eq:2.6e}. If we choose $\varepsilon_0>0$ small (the
smallness of $a$), then $C_1>\frac{1}{2}$ uniformly for
$0<\varepsilon<\varepsilon_0.$ Applying Corollary 3.8 \cite{GT} to
the solution $\rho$ to Dirichlet problem \eqref{eq:2.6c}, we have
\begin{equation}\label{eq:2.6f}
\sup |\rho|\leq \frac{1}{C_1}\left[\sup_{\partial
B_1(0)}|\rho|+\frac{C}{\mu(\tau)}\|g\|_{C^0(\overline{B_1(0)})}\right]=
\frac{C}{C_1\mu(\tau)}\|g\|_{C^0(\overline{B_1(0)})},
\end{equation}
from which we see that the homogeneous problem
$$%\begin{equation}
\left\{
\begin{array}{l}
L_{G}(w)\rho=\sum_{i,j=1}^{3}\frac{\partial S_{2}(r(w))}{\partial
r_{ij}}\partial_i\partial_j\rho+\sum_{i=1}^{3}a_{i}\partial_{i}\rho+a\rho=0,\quad
\text {in}\ \ B_1(0),\\
 \rho=0\indent \text{on}\ \  \partial B_1(0)\,
\end{array}\right.
$$%\end{equation}
has only the trivial solution. Then we can apply a Fredholm
alternative, Theorem 6.15 \cite{GT}, to the inhomogeneous problem
\eqref{eq:2.6c} for which we can assert that it has a unique $C^{2,
\alpha}(\overline{B_1(0)})$ solution for all $g\in
C^{\alpha}(\overline{B_1(0)}).$

With the existence and uniqueness at hand, we can apply Theorem 6.19
 \cite{GT} to obtain higher regularity up to boundary for solution
 to \eqref{eq:2.6c}. Besides this, we have the  Schauder estimates  (see
  Problem 6.2 , \cite{GT})
\begin{equation}\label{eq:2.8a}
\|\rho\|_{C^{4,\alpha}}\leq
C(A,\tau,\|f\|_{C^{3+\alpha}})\left[\|\rho_{k}\|_{C^0(\overline{B_1(0)})}+
\|g_{k}\|_{C^{2,\alpha}(\overline{B_1(0)})}\right],
\end{equation}
 where $C$ depends on $C^{2,\alpha}-$norm of all of the
coefficients; the uniform ellipticity; boundary value and boundary
itself .  we explain the dependence of
$C(A,\tau,\|f\|_{C^{3+\alpha}}).$ Firstly, Since the first two
derivatives of $w$ have come  into the principal coefficients
$\frac{\partial S_{2}(r(w))}{\partial r_{ij}}$, then their
$C^{2+\alpha}$-norms must be involved in $\|w\|_{C^{4,\alpha}}$ ,
and at last $\|w\|_{C^{4,\alpha}}\leq A$  arise into $C$. Similarly,
by virtue of the coefficients $a_i$ and $a$, $\|f\|_{C^{3,\alpha}}$
and $\|w\|_{C^{3,\alpha}}\leq A$ must arise into $C$.
  Secondly, it depends on uniform ellipticity, that is, on
$$\inf\left\{
\frac{\partial \sigma_{2}(\lambda(r(w)))}{\partial
\lambda_{i}}:\|w\|_{C^{4, \alpha}(B_1(0))}\le A,i=1,2,3,\right\}$$
and
$$\sup\left\{ \frac{\partial \sigma_{2}(\lambda(r(w)))}{\partial
\lambda_{i}}:\|w\|_{C^{4, \alpha}(B_1(0))}\le A,i=1,2,3,\right\},$$
so ($\tau=(\tau_1,\tau_2,\tau_3)$) and $A$ arise into $C$.

Thirdly, Since boundary value is =0 and boundary $\partial B_1(0)$
is $C^\infty$, so the two ingredients do not occur into $C$.
  Substituting \eqref{eq:2.6f} into \eqref{eq:2.8a}, we obtain
  \eqref{eq:2.6h}.
\end{proof}

It follows from standard elliptic theory (see Theorem 6.17, \cite{GT} and Remark 2, \cite{CNS1}) and an iteration argument that we obtain.

\begin{corollary}\label{coro:schauder}
Assume that $u\in C^{2, \alpha}(\Omega)$ is a solution of \eqref{eq:1.2}, and the linearized operators with respect to $u$,
$$
\mathcal{L}_u=\sum_{i,j=1}^{3}\frac{\partial
S_{2}(u_{ij})}{\partial r_{ij}}\partial^2_{ij}
-\sum_{i=1}^{3}\frac{\partial f}{\partial
p_i}(y,u(y),Du(y))\partial_{i}-\frac{\partial f}{\partial
z}(y,u(y),Du(y))
$$
is uniformly elliptic, then $u\in C^\infty(\Omega)$.
\end{corollary}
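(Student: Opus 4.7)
The plan is a bootstrap argument: differentiate the fully nonlinear equation \eqref{eq:1.2} in a coordinate direction, observe that each first-order partial derivative of $u$ then satisfies a linear uniformly elliptic equation whose coefficients and right-hand side inherit regularity from $u$, and apply interior Schauder estimates to gain one order of Hölder regularity at each step.

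Concretely, fix $k\in\{1,2,3\}$ and set $v=\partial_k u$. Differentiating \eqref{eq:1.2} with respect to $y_k$ and using the chain rule on $f(y,u,Du)$ gives
\begin{equation*}
\sum_{i,j=1}^{3}\frac{\partial S_{2}(D^2u)}{\partial r_{ij}}\,\partial_i\partial_j v
-\sum_{i=1}^{3}\frac{\partial f}{\partial p_i}(y,u,Du)\,\partial_i v
-\frac{\partial f}{\partial z}(y,u,Du)\,v
=\frac{\partial f}{\partial y_k}(y,u,Du),
\end{equation*}
i.e.\ $\mathcal{L}_u v=\partial_{y_k}f(y,u,Du)$. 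Since $u\in C^{2,\alpha}$ implies $D^2u\in C^\alpha$ and $Du\in C^{1,\alpha}$, and since $f$ is smooth, the principal coefficients $\partial S_2(D^2u)/\partial r_{ij}$ are $C^\alpha$, the lower-order coefficients are $C^{1,\alpha}$, and the right-hand side is $C^{1,\alpha}$. By hypothesis $\mathcal{L}_u$ is uniformly elliptic, so interior Schauder theory (Theorem 6.17 of \cite{GT}) yields $v\in C^{2,\alpha}_{\mathrm{loc}}(\Omega)$. Since $k$ was arbitrary, this upgrades $u$ to $C^{3,\alpha}_{\mathrm{loc}}(\Omega)$.

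The bootstrap now proceeds by induction. Assume inductively that $u\in C^{m,\alpha}_{\mathrm{loc}}(\Omega)$ for some $m\ge 2$. Then the coefficients of $\mathcal{L}_u$ lie in $C^{m-2,\alpha}_{\mathrm{loc}}$ (principal part) and $C^{m-1,\alpha}_{\mathrm{loc}}$ (lower order), while $\partial_{y_k}f(y,u,Du)\in C^{m-1,\alpha}_{\mathrm{loc}}$. Applying Schauder estimates again to $\mathcal{L}_u v=\partial_{y_k}f(y,u,Du)$ gives $v\in C^{m,\alpha}_{\mathrm{loc}}$, hence $u\in C^{m+1,\alpha}_{\mathrm{loc}}$. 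Iterating over $m$ produces $u\in C^{m,\alpha}_{\mathrm{loc}}(\Omega)$ for every $m$, i.e.\ $u\in C^\infty(\Omega)$.

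There is no real obstacle: the argument is entirely standard once \emph{(i)} uniform ellipticity of $\mathcal{L}_u$ is assumed as a hypothesis, and \emph{(ii)} one notes that differentiating \eqref{eq:1.2} is legitimate because $u\in C^{2,\alpha}$ and $f$ is smooth. The only care needed is tracking the regularity class of each coefficient at every step of the induction, which is what makes the iteration clean (the composition $f(y,u(y),Du(y))$ never loses more than one derivative compared to $u$). The reference to Remark 2 of \cite{CNS1} presumably covers exactly this observation that differentiating a fully nonlinear elliptic equation produces a linear elliptic equation amenable to Schauder bootstrap.
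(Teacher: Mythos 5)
Your overall strategy (bootstrap via interior Schauder estimates applied to the linear equation satisfied by $\partial_k u$) is the same as the paper's, but there is a genuine gap at the very first step. You write ``differentiating \eqref{eq:1.2} with respect to $y_k$ \ldots gives $\mathcal{L}_u v = \partial_{y_k} f(y,u,Du)$'' and justify this by saying differentiation is legitimate ``because $u\in C^{2,\alpha}$.'' But the chain rule on the left-hand side,
$$
\partial_{y_k}\,S_2(D^2u)=\sum_{i,j=1}^3\frac{\partial S_2(D^2u)}{\partial r_{ij}}\,\partial_i\partial_j\partial_k u,
$$
requires third derivatives of $u$, which is precisely what the corollary is trying to establish. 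At the base step $u$ is only $C^{2,\alpha}$, so $\partial_k D^2 u$ does not a priori exist and the displayed linear equation for $v=\partial_k u$ is not yet available. The bootstrap you propose for $m\ge 3$ is fine; it is the $m=2$ step that is circular.

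The paper avoids this by using difference quotients rather than derivatives: it sets $\triangle^h_l u(y)=\bigl(u(y+he_l)-u(y)\bigr)/h$, which is $C^{2,\alpha}$ whenever $u$ is, and writes
$$
S_2\bigl(u_{ij}(y+he_l)\bigr)-S_2\bigl(u_{ij}(y)\bigr)=\sum_{i,j=1}^3 a_{ij}(y)\bigl[u_{ij}(y+he_l)-u_{ij}(y)\bigr]
$$
with $a_{ij}(y)=\int_0^1 \partial_{r_{ij}}S_2\bigl(tu_{ij}(y+he_l)+(1-t)u_{ij}(y)\bigr)\,dt$, and similarly expands the increment of $f$. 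Dividing by $h$ gives a linear, uniformly elliptic equation for $\triangle^h_l u$ with $C^\alpha$ coefficients \emph{uniformly in $h$}, so interior Schauder estimates (Corollary 6.3 in \cite{GT}) bound $\|\triangle^h_l u\|_{C^{2,\alpha}}$ independently of $h$; letting $h\to0$ yields $\partial_l u\in C^{2,\alpha}$ and hence $u\in C^{3,\alpha}$. From there, your induction applies verbatim. So your proof is repaired simply by replacing the formal first differentiation by this difference-quotient argument (which is what the cited Remark~2 of \cite{CNS1} refers to).
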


\begin{proof}
Let $v$ be a function on $\Omega$ and denote by $e_l,\,\,l=1,2,3$ the
unit coordinate vector in the $y_l$ direction. We define the
difference quotient of $v$ at $y$ in the direction $e_l$ by
$$
\triangle^hv(y)=\triangle^h_lv(y)=\frac{v(y+he_l)-v(y)}{h}.
$$
Since
\begin{align*}
&S_{2}(u_{ij}(y+he_l))-S_{2}(u_{ij}(y))\\
=&\int^1_0\frac{d}{dt}[S_{2}(tu_{ij}(y+he_l)+(1-t)u_{ij}(y))]dt\\
=&\sum_{i,j=1}^3\int^1_0\frac{\partial}{\partial
r_{ij}}[S_{2}(tu_{ij}(y+he_l)+(1-t)u_{ij}(y))]dt[u_{ij}(y+he_l)-u_{ij}(y)]\\
\equiv&\sum_{i,j=1}^3 a_{ij}(y)[u_{ij}(y+he_l)-u_{ij}(y)]
\end{align*}
and Taylor expansion give
\begin{align*}
&f(y+he_l,u(y+he_l),Du(y+he_l))-f(y,u(y),Du(y))\\
=&\sum_{i=1}^3b_i(y)[u_{i}(y+he_l)-u_{i}(y)]+c(y)[u(y+he_l)-u(y)]+g(y)h
\end{align*}
with
\begin{align*}
b_i(y)&=\int^1_0\frac{\partial f}{\partial p_i}(t(y+he_l)+(1-t)y,tu(y+he_l)+(1-t)u(y),tDu(y+he_l)+(1-t)D(y))dt\\
c(y)&=\int^1_0\frac{\partial f}{\partial
z}(t(y+he_l)+(1-t)y,tu(y+he_l)+(1-t)u(y),tDu(y+he_l)+(1-t)D(y))dt\\
g(y)&=\int^1_0\frac{\partial f}{\partial
y_l}(t(y+he_l)+(1-t)y,tu(y+he_l)+(1-t)u(y),tDu(y+he_l)+(1-t)D(y))dt.
\end{align*}

Taking the difference quotients of both sides of the equation
$$S_{2}(u_{ij}(y))=f(y,u,Du),$$
we have
$$\sum_{i,j=1}^3 a_{ij}(y)\partial_i\partial_j\triangle^hu(y)-
\sum_{i=1}^3b_i(y)\partial_i\triangle^hu(y)-c(y)\triangle^hu(y)=g(y).$$

Since $u\in C^{2, \alpha}(\Omega)$, then all the coefficients
$a_{ij},b_i,c$ and inhomogeneous term $g$ are in $C^\alpha
(\Omega),$ from the interior estimates of Corollary 6.3 in \cite{GT}, we
can infer
$$
\triangle^hu\in C^{2,\alpha}(\Omega).
$$
Letting $h\rightarrow 0,$ we see $\partial_l u\in C^{2,\alpha}(\Omega), l=1, 2, 3$ and
$$
\sum_{i,j=1}^3 \frac{\partial S_2(D^2u)}{\partial r_{ij}}\partial_i\partial_j(\partial_lu)-
\sum_{i=1}^3\frac{\partial f}{\partial
p_i}\partial_i(\partial_lu)-\frac{\partial f}{\partial
z}(\partial_l u)=\frac{\partial f}{\partial y_l}.
$$
Repeating the above proof, we obtain $u\in C^\infty(\Omega).$
\end{proof}

Using above Lemma \ref{lemma:schauder}, we can use the procedure \eqref{eq:2.6} to
construct the sequence $\{w_m\}_{m\in \mathbb{N}}$. Now we
study the convergence of $\{w_m\}_{m\in \mathbb{N}}$ and that of $\{g_m\}_{m\in \mathbb{N}}$.

\begin{proposition}\label{nms}
Let $\{w_m\}_{m\in \mathbb{N}}$ and $\{g_m\}_{m\in \mathbb{N}}$ the
sequence in \eqref{eq:2.6}. Suppose that
$\|w_{j}\|_{C^{4,\alpha}}\leq A$  for $j=1,2,\ldots,k$. Then we have
\begin{equation}\label{eq:2.7}
\|g_{k+1}\|_{C^{2,\alpha}}\leq
C[\|g_{k}\|_{C^{2,\alpha}}^{2}+\|g_{k}\|_{C^{2,\alpha}}^{3}] ,
\end{equation}
where $C$ is some positive constant depends only on $\tau$ ,$A$ and
 $ \|{f}\|_{C^{4,\alpha}}.$
 In particular, $C$ is independent of $k.$
\end{proposition}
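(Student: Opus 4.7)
The plan is to exploit the Newton-type cancellation built into the scheme \eqref{eq:2.6}. For $w_{k+1} = w_k + \rho_k$, a first-order Taylor expansion of $G$ at $w_k$ in the direction $\rho_k$ yields
\begin{equation*}
G(w_{k+1}) = G(w_k) + L_G(w_k)\rho_k + Q_k,
\end{equation*}
where $Q_k$ collects all terms of order $\ge 2$ in $(\rho_k, D\rho_k, D^2\rho_k)$. By construction of the scheme, $L_G(w_k)\rho_k = g_k = -G(w_k)$, so the first two terms cancel and
\begin{equation*}
g_{k+1} = -G(w_{k+1}) = -Q_k.
\end{equation*}
The proposition therefore reduces to estimating $\|Q_k\|_{C^{2,\alpha}}$ in terms of $\|\rho_k\|_{C^{4,\alpha}}$ and then invoking the Schauder estimate \eqref{eq:2.6h} of Lemma \ref{lemma:schauder} to pass from $\rho_k$ to $g_k$.

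I would decompose $Q_k$ according to the two pieces of $G = \varepsilon^{-1}[S_2(r(w)) - \tilde{f}_\varepsilon(x, w, Dw)]$. Because $\sigma_2$ is a homogeneous polynomial of degree $2$ in the matrix entries, the identity $S_2(A+B) = S_2(A) + \sum S_2^{ij}(A)B_{ij} + S_2(B)$ applied with $A = r(w_k)$ and $B = \varepsilon D^2\rho_k$ shows that the expansion of $S_2(r(w_k) + \varepsilon D^2\rho_k)$ terminates exactly; after dividing by $\varepsilon$ the quadratic remainder is just $\varepsilon\,S_2(D^2\rho_k)$, with $C^{2,\alpha}$-norm bounded by $C\|\rho_k\|_{C^{4,\alpha}}^2$. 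For the $\tilde{f}_\varepsilon$ part, I would use the integral form of the first-order Taylor remainder in $(z,p)$, whose integrand is quadratic in $(\rho_k, D\rho_k)$ with coefficients $D^2_{(z,p)}\tilde{f}_\varepsilon$ evaluated at $(x, w_k + s\rho_k, Dw_k + sD\rho_k)$. The chain rule, combined with the factors $\varepsilon^5$ and $\varepsilon^3$ that $\partial_z$ and $\partial_{p_i}$ extract from $\tilde{f}_\varepsilon = f(\varepsilon^2 x, \varepsilon^4\psi + \varepsilon^5 w, \tau\varepsilon^2 x + \varepsilon^3 Dw)$, easily beats the prefactor $\varepsilon^{-1}$, so that the $\varepsilon$-dependence is uniformly controlled for $\varepsilon \in (0,\varepsilon_0]$.

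Applying the product rule in $C^{2,\alpha}$ together with the standard tame composition estimate, the coefficient $D^2_{(z,p)}\tilde{f}_\varepsilon(\cdot, w_k+s\rho_k, Dw_k+sD\rho_k)$ is controlled in $C^{2,\alpha}$ by $C(A, \|f\|_{C^{4,\alpha}})(1 + \|\rho_k\|_{C^{4,\alpha}})$, the extra $\|\rho_k\|$-factor coming from composition through $w_k + s\rho_k$ (whose $C^{4,\alpha}$-norm is $\le A + \|\rho_k\|_{C^{4,\alpha}}$). Multiplying by the quadratic $(\rho_k, D\rho_k)^2$ factor yields
\begin{equation*}
\|Q_k\|_{C^{2,\alpha}} \le C\|\rho_k\|_{C^{4,\alpha}}^2 + C\|\rho_k\|_{C^{4,\alpha}}^3,
\end{equation*}
with $C = C(\tau, A, \|f\|_{C^{4,\alpha}})$ independent of $k$ and of $\varepsilon \in (0,\varepsilon_0]$. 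Under the running hypothesis $\|w_j\|_{C^{4,\alpha}}\le A$ for $j\le k$, Lemma \ref{lemma:schauder} is applicable at step $k$ and gives $\|\rho_k\|_{C^{4,\alpha}} \le C\|g_k\|_{C^{2,\alpha}}$; substituting into the previous display produces \eqref{eq:2.7}.

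The main technical obstacle I anticipate is the careful bookkeeping of the product and composition estimates in H\"{o}lder spaces, to verify that no exponent higher than $3$ in $\|\rho_k\|_{C^{4,\alpha}}$ ever appears and that all constants remain uniform in $k$ and in $\varepsilon \in (0, \varepsilon_0]$. Concretely one must track, through the chain rule for the two $x$-differentiations of $\tilde{f}_\varepsilon(x, w_k+s\rho_k, Dw_k+sD\rho_k)$ that go into the $C^{2,\alpha}$-norm, the maximum number of $(\rho_k, D\rho_k, D^2\rho_k)$-factors that can appear in any single summand once one further multiplies by the quadratic factor $(\rho_k, D\rho_k)^2$.
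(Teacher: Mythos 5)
Your proposal is correct and follows essentially the same route as the paper: Newton cancellation of $G(w_k)+L_G(w_k)\rho_k$, splitting the quadratic remainder into the $S_2$ and $\tilde f_\varepsilon$ contributions, tracking the $\varepsilon$-powers produced by the chain rule to beat the $\varepsilon^{-1}$ prefactor, bounding the remainder by $C(\|\rho_k\|_{C^{4,\alpha}}^2+\|\rho_k\|_{C^{4,\alpha}}^3)$, and finally invoking Lemma~\ref{lemma:schauder}. The only cosmetic difference is that you observe $S_2$ is an exactly quadratic polynomial so that its expansion terminates with remainder $\varepsilon S_2(D^2\rho_k)$, whereas the paper phrases the same fact as $\partial^2 S_2/\partial w_{ij}\partial w_{st}$ being the constant $\varepsilon^2$ or $0$.
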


\begin{proof}
Applying Taylor's expansion with integral-typed remainder to
\eqref{eq:2.3}, we have
\begin{align*}
-g_{k+1}&=G(w_{k}+\rho_{k})=G(w_{k})+L_{G}(w_{k})\rho_{k}+Q(w_{k},\rho_{k})\\
&=-g_k+L_{G}(w_{k})\rho_{k}+Q(w_{k},\rho_{k})=Q(w_{k},\rho_{k}),
\end{align*}
where $Q_{k}$ is the quadratic error of $G$ which consists of $S_{2}$ and $f$.
\begin{equation*}
\begin{split}
Q(w_{k},\rho_{k})=&\sum_{ij,st}\frac{1}{\varepsilon}\int
(1-\mu)\frac{\partial^{2}S_{2}(w_k+\mu\rho_k)}{\partial
w_{ij}\partial w_{st}}d\mu(\rho_{k})_{ij}(\rho_{k})_{st}\\
&+\sum_{i,j}\frac{1}{\varepsilon}\int (1-\mu)\frac{\partial^{2}\tilde
f_\varepsilon(w_k+\mu\rho_k)}{\partial
w_{i}\partial w_{j}}d\mu(\rho_{k})_{i}(\rho_{k})_{j}\\
&+\frac{1}{\varepsilon}\sum_{i}\int (1-\mu)\frac{\partial^{2}\tilde
f_\varepsilon(w_k+\mu\rho_k)}{\partial
w\partial w_{i}}d\mu(\rho_{k})_{i}(\rho_k)\\
&+\frac{1}{\varepsilon}\int(1-\mu)\frac{\partial^{2}\tilde
f_\varepsilon(w_k+\mu\rho_k)}{\partial w^{2}}d\mu\cdot
\rho^{2}_k\\
&=I_1+I_2+I_3+I_4
\end{split}
\end{equation*}

Since $S_2((r(w)))$ is a second-order homogeneous polynomial with
variable $r_{ij}(r(w))$ and $\tilde{f_\varepsilon}(x,w,Dw)$ is independent
of $r_{ij},$ we see that
$$
\left|\frac{\partial^{2}S_{2}(w_k+\mu\rho_k)}{\partial
w_{ij}w_{st}}\right|=\frac{\partial^{2}S_{2}}{\partial
w_{ij}\partial{w_{st}}}(\delta^{j}_{i}\tau_{i}+\varepsilon
(w_k+\mu\rho_k)_{ij})=\varepsilon^{2} \quad \textup{or} \quad 0,
$$
$$
\left|\frac{\partial^{2}\tilde f_\varepsilon(w_k+\mu\rho_k)}{\partial
w_{i}\partial w_{j}}\right|=\left|\frac{\partial^{2}[f(\varepsilon
x,\varepsilon^{4}\psi+\varepsilon^{5}(w_k+\mu\rho_k),\varepsilon^{3}D\psi+
\varepsilon^{3}D(w_k+\mu\rho_k))]}{\partial
w_{i}\partial w_{j}}\right|\leq \varepsilon^{6}\cdot
\|f\|_{C^{2}},
$$
$$
\left|\frac{\partial^{2}\tilde f_\varepsilon(w_k+\mu\rho_k)}{\partial
w\partial w_{i}}\right|=\left|\frac{\partial^{2}[f(\varepsilon
x,\varepsilon^{4}\psi+\varepsilon^{5}(w_k+\mu\rho_k),\varepsilon^{3}
D\psi+\varepsilon^{3}D(w_k+\mu\rho_k))]}{\partial
w\partial w_{i}}\right|\leq \varepsilon^{8}\|f\|_{C^{2}},
$$
$$
\left|\frac{\partial^{2}\tilde f_\varepsilon(w_k+\mu\rho_k)}{\partial w^{2}}\right|=
\left|\frac{\partial^{2}[f(\varepsilon
x,\varepsilon^{4}\psi+\varepsilon^{5}(w_k+\mu\rho_k),\varepsilon^{3}
D\psi+\varepsilon^{3}D(w_k+\mu\rho_k))]}{\partial
w^{2}}\right|=\varepsilon^{10}\|f\|_{C^{2}}.
$$
Thus, $I_i(1\leq
i\leq 4)$ in $Q_{k}$ are under control by $O(\varepsilon)$,
$O(\varepsilon^{5})$, $O(\varepsilon^{7})$ and $O(\varepsilon^{9})$,
repectively. Therefore
$$\|I_1\|_{C^{2,\alpha}}\leq C\|\rho_{k}\|_{C^2}\|\rho_{k}\|_{C^{4,\alpha}}$$
and
\begin{equation*}
\begin{split}
\|I_2\|_{C^{2,\alpha}}
\leq&C\|f\|_{C^{4,\alpha}}(\|w_k\|_{C^{3,\alpha}}+\|\rho_k\|_{C^{3,\alpha}})\|\rho_k\|^2_{C^{1}}+
C\|f\|_{C^{2}}\|\rho_k\|_{C^{3,\alpha}}\|\rho_k\|_{C^{1}} \\
\leq &
C\|\rho_k\|_{C^{3,\alpha}}\|\rho_k\|^2_{C^{1}}+C\|\rho_k\|^2_{C^{1}}+C\|\rho_k\|_{C^{3,\alpha}}\|\rho_k\|_{C^{1}}
\end{split}
\end{equation*}
 where $C$ depends on $A$ and $\|f\|_{C^{4,\alpha}}$.
 And $\|I_3\|_{C^{2,\alpha}}$  and $\|I_4\|_{C^{2,\alpha}}$ can be
estimated similarly.   Accordingly,
\begin{equation*}
\begin{split}
\|g_{k+1}\|_{C^{2,\alpha}}&= \|Q(w_{k},\rho_{k})\|_{C^{2,\alpha}}
\leq \sum_{i=1}^4\|I_i\|_{C^{2,\alpha}}\\
\leq &
C\|\rho_{k}\|_{C^2}\|\rho_{k}\|_{C^{4,\alpha}}+C\|\rho_k\|_{C^{3,\alpha}}\|\rho_k\|^2_{C^{1}}+
\|\rho_k\|^2_{C^{1}}+C\|\rho_k\|_{C^{3,\alpha}}\|\rho_k\|_{C^{1}}
\end{split}
\end{equation*}
where $C$ is independent of $k$ but dependent of $A$ and
$\|f\|_{C^{4,\alpha}}$. Thus, by the interpolation inequalities,  we
have
$$
\|g_{k+1}\|_{C^{2,\alpha}}\leq  C\|\rho_{k}\|^2_{C^{4,\alpha}}+
C\|\rho_{k}\|^3_{C^{4,\alpha}},
$$
where $C$ is independent of $k$. By Schauder estimates of Lemma
\ref{lemma:schauder}, we have
$$
\|\rho_{k}\|_{C^{4,\alpha}}\leq
C \|g_{k}\|_{C^{2,\alpha}}.
$$
Combining the estimates above, we obtain \eqref{eq:2.7}. Proof is
done.
\end{proof}
Since $C$ is independent of $k$, more exactly, $A$, $\tau$ and
$\|f\|_{C^{4,\alpha}}$ are
independent of $k$. So here and after, we can assume $A=1.$

\begin{proof}[\bf Proof of Theorem \ref{th:M1}]

Set
\begin{equation}\label{eq:2.10}
d_{k+1}=C\|g_{k+1}\|_{C^{2,\alpha}}.
\end{equation}

By \eqref{eq:2.7} with letting $C\geq 1$ we have
$$
d_{k+1}\leq d_{k}^{2}+d_{k}^{3}.
$$
Take $\tau\in\mathbb{R}^3$ as in Lemmas \ref{lm:tau1} and
\ref{lemma:negative} such that  $\sigma_2(\tau)=f(0, 0, 0)$,
we have
\begin{align*}
g_0(x)=&\frac{1}{\varepsilon}\Big(\sigma_2(\tau)-
f\big(\varepsilon^2 x, \varepsilon^4
\psi(x), \varepsilon^2(\tau_1 x_1, \tau_2 x_2,\tau_3 x_3)\big)\Big)\\
=&\frac{1}{\varepsilon}[\sigma_2(\tau)-{f}(0, 0, 0)]\\
&+\varepsilon \int^1_0 x\,\cdot\,(\partial_yf)\Big(t\varepsilon^2 x,
t\varepsilon^4
\psi(x), t\varepsilon^2(\tau_1 x_1, \tau_2 x_2,\tau_3 x_3)\Big) dt\\
&\,\,\,\,+\varepsilon^3 \int^1_0
\psi(x)(\partial_zf)\Big(t\varepsilon^2 x, t\varepsilon^4
\psi(x), t\varepsilon^2(\tau_1 x_1, \tau_2 x_2,\tau_3 x_3)\Big) dt\\
&\,\,\,\,+\varepsilon \int^1_0 (\tau_1 x_1, \tau_2 x_2,\tau_3
x_3)\,\cdot\,(\partial_pf)\Big(t\varepsilon^2 x, t\varepsilon^4
\psi(x), t\varepsilon^2(\tau_1 x_1, \tau_2 x_2,\tau_3 x_3)\Big) dt,
\end{align*}
then
$$
\|g_0\|_{C^{2, \alpha}(B_1(0))}\le \varepsilon C_1 \|f\|_{C^{3,
\alpha}}.
$$
We can choose $0<\varepsilon\leq \varepsilon_0$ so small such that
$$
C\|g_{0}\|_{C^{2,\alpha}(B_1(0))}\le 1/4, \indent 0<\varepsilon\leq
\varepsilon_0.
$$

Notice $\varepsilon_0$ is independent of $k.$ Since
$d_{0}=C\|g_{0}\|_{C^{2,\alpha}}$, we have $d_1\leq 2d_0^2$ and, by
induction,
$$%\begin{equation}\label{eq:3.9}
d_{k+1}\leq
2^{2^{k+1}}d_0^{2^{k+1}}\leq(2C)^{2^{k+1}}\|g_{0}\|^{2^{k+1}}_{C^{2,\alpha}},
$$%\end{equation}
Thus, by \eqref{eq:2.10}
$$
\|g_{k+1}\|_{C^{2,\alpha}}\leq (2C)^{2^{k+1}-1}\|g_{0}\|^{2^{k+1}}_{C^{2,\alpha}}.
$$
Firstly, we claim that there exists $\varepsilon>0,$
depending on $\tau$ and $\|f\|_{C^{3,\alpha}}$ such that
$$%\begin{equation}\label{eq:2.11}
\|w_{k}\|_{C^{4,\alpha}(B_1(0))}\leq 1,\ \ \forall k\geq 1.
$$%\end{equation}
Indeed, set $w_0=0,$  we have by \eqref{eq:2.7}
\begin{align*}
\|w_{k+1}\|_{C^{4,\alpha}(B_1(0))}&=\|\sum_{i=0}^{k}\rho_{i}\|_{C^{4,\alpha}(B_1(0))}
\leq\sum_{i=0}^{k}\|\rho_{i}\|_{C^{4,\alpha}(B_1(0))}\\
&\leq \sum_{i=0}^{k}C\|g_{i}\|_{C^{2,\alpha}(B_1(0))}\leq
\sum_{i=0}^{k}\Big(2C\|g_{0}\|_{C^{2,\alpha}(B_1(0))}\Big)^{2^{i}}\\
\end{align*}
where $C$ is defined in Lemma \ref{nms}.  Thus, for any $k$,
$$
\|w_{k+1}\|_{C^{4,\alpha}(B_1(0))}\leq
\sum_{i=0}^{\infty}\Big(C\|g_{0}\|_{C^{2,\alpha}(B_1(0))}\Big)^{2^{i}}\leq
\sum_{i=0}^{\infty}2^{-2^{i}}\leq 1.
$$
Then, by Azel\`{a}-Ascoli Theorem, we have
$$
w_{k}\rightarrow w\indent \textup{in}~C^{4}((B_1(0))).
$$
From \eqref{eq:2.7}, we see that
$$
\|g_{k+1}\|_{C^{2,\alpha}(B_1(0))}\leq
\left(\frac 12\right)^{2^k}\,\,\rightarrow\,\, 0,
$$
and then $g_{m}=-G(w_{m})$ yields
$$
G(w)=\frac{1}{\varepsilon}[S_2(r(w))-\tilde{f}(x, w, Dw)]=0,\qquad \mbox{on}\,\,\,
B_1(0)
$$
which yields that the function
$$
u(y)=\frac{1}{2}\sum_{i=1}^3\tau_i
y_i^2+\varepsilon^{5}w(\varepsilon^{-2}y)\in C^4(B_{\varepsilon^2}(0)),
$$
is a solution of
$$
S_2[u]=f(y, u, Du),\qquad \mbox{on}\,\,\,B_{\varepsilon^2}(0)\, .
$$

Now if $f(0, 0, 0)=0$, we take $\tau\in\mathbf{P}_2$, then $\sigma_1(\tau)>0,
\sigma_2(\tau)=0, \sigma_3(\tau)<0$, and \eqref{eq:S1-S2} imply,
$$
S_j[u]=\sigma_j(\lambda)=\sigma_j(\tau)+O(\varepsilon),\qquad j=1, 2, 3
$$
it follows that $S_1[u]>0, S_3[u]<0$ on $B_{\varepsilon^2}(0)$ for small $\varepsilon>0,$ that is, $u$ is $1-$convex but not convex. Moreover if $S_2[u]=f\geq 0$ near $Z_0$ and
$f(Z_0)=0$, we see that $u$ is $2$-convex by definition, but not $3$-convex.

If $S_2[u]=f>0$ near $Z_0$, we take $\tau\in \mathbb{R}^3$ given in (2) and (3) of Lemmas \ref{lemma:negative}, then we can get the $3$-convex or non convex local solutions.

The $C^\infty$ regularity of solution is given by Corollary \ref{coro:schauder}.
We have then proved Theorem \ref{th:M1}.
\end{proof}

We also have the following elliptic results for negative $f$

\begin{theorem}\label{th:M2}
Let $f \in C^{\infty}, f(0, 0, 0)<0$. Then \eqref{eq:1.2} admits a $1-$convex local solution $u\in C^{\infty}$ in a neighborhood of $y_0=0$  which is not $2-$convex,
it is of the following form
$$%\begin{equation}\label{eq:1.12}
u(y)=\frac{1}{2}\sum^{3}_{i=1}\tau_{i}y^{2}_{i}+\varepsilon^{5}w(\varepsilon^{-2}
y)\, ,
$$%\end{equation}
and the equation \eqref{eq:1.2} is uniformly elliptic with respect to this solution.
\end{theorem}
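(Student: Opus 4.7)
The plan is to mirror the Nash-Moser scheme used for Theorem \ref{th:M1}, replacing the choice of $\tau\in\mathbf{P}_{2}$ by a $\tau\in\Gamma_1\setminus\bar{\Gamma}_2$ produced by Lemma \ref{lemma:negative}(1). Setting $a=f(0,0,0)<0$, that lemma yields $\tau=(\tau_1,\tau_2,\tau_3)$ with $\tau_1>\tau_2>0>\tau_3$, $\sigma_1(\tau)>0$, $\sigma_2(\tau)=a$, and $0<\sigma_{1,1}(\tau)<\sigma_{1,2}(\tau)<\sigma_{1,3}(\tau)$. I would then adopt the same ansatz $u(y)=\tfrac{1}{2}\sum\tau_i y_i^2+\varepsilon^{5}w(\varepsilon^{-2}y)$ and reduce \eqref{eq:1.2} on $B_{\varepsilon^2}(0)$ to $G(w)=0$ on $B_1(0)$ exactly as in \eqref{eq:2.3}.

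The first thing to check is that Lemma \ref{eq:3.4} still applies in this setting. Its proof only used the sign pattern $\tau_3<0<\tau_2\le\tau_1$ together with the positivity $\sigma_{1,i}(\tau)>0$ for $i=1,2,3$, and both are available here from Lemma \ref{lemma:negative}(1). Consequently the characteristic-polynomial plus intermediate-value argument produces $\lambda_i(r(w))+\lambda_j(r(w))=\tau_i+\tau_j+O(\varepsilon)$ for $i\neq j$, so $L_G(w)$ remains uniformly elliptic for all sufficiently small $\varepsilon$. Lemma \ref{lemma:schauder} can then be invoked without modification to solve the linearized Dirichlet problem \eqref{eq:2.6b} and supply the Schauder estimate \eqref{eq:2.6h}.

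With ellipticity and Schauder estimates secured, I would run the iteration \eqref{eq:2.6}. The crucial initialization bound $\|g_0\|_{C^{2,\alpha}}\le C\varepsilon$ is still valid, because the choice $\sigma_2(\tau)=f(0,0,0)$ annihilates the constant term in the Taylor expansion of $g_0(x)$, leaving only contributions of order $\varepsilon$ or higher in the remaining integrals. Proposition \ref{nms} then delivers the quadratic estimate $\|g_{k+1}\|_{C^{2,\alpha}}\le C(\|g_k\|_{C^{2,\alpha}}^{2}+\|g_k\|_{C^{2,\alpha}}^{3})$, and the bookkeeping from the proof of Theorem \ref{th:M1} gives both $\|w_m\|_{C^{4,\alpha}}\le 1$ uniformly in $m$ and $w_m\to w$ in $C^4(\overline{B_1(0)})$ with $g_m\to 0$. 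Pulling back through the rescaling, $u$ is a $C^4$ solution of \eqref{eq:1.2} on $B_{\varepsilon^2}(0)$, and Corollary \ref{coro:schauder} upgrades it to $C^\infty$.

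For the convexity assertion I would use the identities analogous to \eqref{eq:S1-S2}: $S_1[u]=\sigma_1(\tau)+O(\varepsilon)>0$ while $S_2[u]=f<0$ near $y_0=0$, so $u$ is $1$-convex but fails to be $2$-convex. I expect no genuinely new analytic obstacle; the only place where the proof of Theorem \ref{th:M1} could have broken down is the preservation of uniform ellipticity through the rescaled perturbation, and that is precisely what Lemma \ref{lemma:negative}(1) was designed to guarantee. Essentially the entire argument of Theorem \ref{th:M1} transports to this setting, with only the base quadratic $\psi$ changed to reflect the negative value of $f$ at the base point.
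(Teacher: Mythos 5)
Your proposal is correct and follows essentially the same route as the paper: choose $\tau$ via Lemma \ref{lemma:negative}(1) with $\sigma_2(\tau)=f(0,0,0)<0$, observe that the sign pattern $\tau_3<0<\tau_2\le\tau_1$ and positivity of $\sigma_{1,i}(\tau)$ make Lemma \ref{eq:3.4}, Lemma \ref{lemma:schauder}, and Proposition \ref{nms} apply verbatim, and repeat the Nash--Moser iteration of Theorem \ref{th:M1} with the same $g_0$ estimate. Your explicit re-check that Lemma \ref{eq:3.4} carries over (since it was stated only for $\tau\in\mathbf{P}_2$) is a careful touch the paper leaves implicit, but it is not a different argument.
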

\begin{proof}
For $a=f(0, 0, 0)<0$, take $\tau\in\mathbb{R}^3$ as in (1) of Lemma \ref{lemma:negative} such that
$$
\sigma_1(\tau)>0,\quad \sigma_2(\tau)=f(0, 0, 0)<0,
$$
and
$$
\sigma_{1, 3}(\tau)>\sigma_{1, 2}(\tau)>\sigma_{1, 1}(\tau)>0.
$$
Now the proof is exactly same as that of Theorem \ref{th:M1} except the estimate of
term $g_0$, we use Taylor expansion,
\begin{align*}
g_0(x)&=-G(0)=\frac{1}{\varepsilon}[S_2(r(0))-\tilde{f}(x, 0, 0)]\\
&=\frac{1}{\varepsilon}\Big[\sigma_2(\tau)-f\big(\varepsilon^2 x, \varepsilon^4
\psi(x), \varepsilon^2(\tau_1 x_1, \tau_2 x_2,\tau_3 x_3)\big)\Big]\\
&=\frac{1}{\varepsilon}[\sigma_2(\tau)-{f}(0, 0, 0)]\\
&\,\,\,\,+\varepsilon \int^1_0 x\,\cdot\,(\partial_yf)\Big(t\varepsilon^2 x, t\varepsilon^4
\psi(x), t\varepsilon^2(\tau_1 x_1, \tau_2 x_2,\tau_3 x_3)\Big) dt\\
&\,\,\,\,+\varepsilon^3 \int^1_0 \psi(x)(\partial_zf)\Big(t\varepsilon^2 x, t\varepsilon^4
\psi(x), t\varepsilon^2(\tau_1 x_1, \tau_2 x_2,\tau_3 x_3)\Big) dt\\
&\,\,\,\,+\varepsilon \int^1_0 (\tau_1 x_1, \tau_2 x_2,\tau_3 x_3)\,\cdot\,(\partial_pf)\Big(t\varepsilon^2 x, t\varepsilon^4
\psi(x), t\varepsilon^2(\tau_1 x_1, \tau_2 x_2,\tau_3 x_3)\Big) dt,
\end{align*}
then we can end the proof of Theorem \ref{th:M2} exactly as that of Theorem \ref{th:M1}.
\end{proof}

\smallskip
\noindent {\bf Acknowledgements.} The research of first author is
supported by the National Science Foundation of China No.11171339
and Partially supported by National Center for Mathematics and
Interdisciplinary Sciences. The research of the second author and
the last author is supported partially by ``The Fundamental Research
Funds for Central Universities'' and the National Science Foundation
of China No. 11171261.

\vspace{3mm}

\end{document}